\newtheorem{theoremABC}{Theorem} 
\theoremstyle{plain}
\newtheorem{theorem}{Theorem}[section]
\newtheorem{lemma}[theorem]{Lemma}
\newtheorem{corollary}[theorem]{Corollary}
\theoremstyle{definition}
\theoremstyle{remark}
\newtheorem{remark}[theorem]{Remark}
\def\al{\alpha}
\def\O{\Omega}
\def\be{\begin{equation}}
\def\ee{\end{equation}}
\def\bes{\begin{equation*}}
\def\ees{\end{equation*}}
\def\bali{\begin{aligned}}
\def\eali{\end{aligned}}
\def\al{\begin{aligned}}
\def\eal{\end{aligned}}
\def\lab{\label}
\def\2O{\underline{\O}}
\numberwithin{equation}{section}
\def\dashint{\operatorname%
{\,\,\text{\bf--}\kern-.98em\DOTSI\intop\ilimits@\!\!}}
\begin{document}

\title[backward Harnack]{Backward Harnack inequality and Hamilton estimates for heat type equations}

\author[Lu]{Juanling Lu}

    \address{School of Mathematical Sciences, East China Normal University, 500 Dongchuan Road, Shanghai 200241, P. R. of China,
    E-mail address:    lujuanlingmath@163.com,}

 \author[Wu]{ Yuting Wu}

 \address{College of Mathematics and Statistics, Northwest Normal University, Lanzhou 730070, Gansu, P. R. of China, E-mail address: 090094@nwnu.edu.cn,}

 \author[Zhang]{Qi S. Zhang}

  \address{Department of Mathematics, University of California, Riverside, CA 92521, USA, E-mail address: qizhang@math.ucr.edu.}

\date{}

\begin{abstract}

 Based on gradient estimates for the heat equation by Hamilton, we discover a  backward in time Harnack inequality for positive solutions on compact manifolds without further restrictions such as boundedness or vanishing boundary value for solutions. Contrary to the usual Harnack inequality, it allows comparison of the values of a solution at two different space time points, in both directions of time. In view of the importance of the usual Harnack inequality, further application is expected.

 Next we prove additional Li-Yau and Hamilton-type gradient estimates for positive solutions to the conjugate heat equation and the heat equation, coupled with the Ricci flow. In particular, the Li-Yau-type bound holds without any curvature conditions. Further more,  under different curvature assumptions from those in \cite{Zhang Qi and Li Xiaolong}, we prove some matrix Li-Yau-Hamilton estimates for positive solutions to the heat equation for type III Ricci flows.
\end{abstract}

\maketitle

\begin{center}
Dedicated to the memory of Professor R. S. Hamilton
\end{center}

\vspace{0.5cm}

\section{Introduction}

\subsection{Backward Harnack inequality}

In the influential paper \cite{Peter Li and Yau}, P. Li and S.-T. Yau showed how the classical Harnack principle for the heat equation on a manifold can be derived from a differential inequality. In particular, they discovered that if an n-dimensional compact Riemannian manifold $(\mathrm{\bf{M}}^{n},g)$ has a lower bound on its Ricci curvature, i.e., $\mathrm{Ric}\geq-Kg$, then all positive solutions $u: \mathrm{\bf{M}}^{n} \times[0,\infty) \rightarrow \mathbb{R}$ of the heat equation
\begin{equation}\label{def. of heat equation for fixed metric}
\partial_{t}u-\Delta_{g} u=0
\end{equation}
satisfy the following estimate
\begin{equation*}
\frac{|\nabla u|^{2}}{u^{2}}-\alpha\frac{\partial_{t}u}{u}
\leq\frac{n\alpha^{2}K}{2(\alpha-1)}+\frac{n\alpha^{2}}{2t},\quad \forall \alpha>1,t>0.
\end{equation*}
In the special case that $\mathrm{Ric}\geq0$, one obtains the optimal Li-Yau bound
$$
\frac{\partial_{t}u}{u}-\frac{|\nabla u|^{2}}{u^{2}}+\frac{n}{2t} = \Delta\ln u +\frac{n}{2t} \geq 0
$$
for all $(x,t) \in \mathrm{\bf{M}}\times(0,\infty)$. As mentioned, one of the many applications of this result is the Harnack inequality:
\begin{equation}\label{forward Harnack ineq.}
u(x,t_{1})\leq u(y,t_{2})\left(\frac{t_{2}}{t_{1}}\right)^{\frac{n\alpha}{2}}
\exp\left\{\frac{\alpha d^{2}(x,y)}{4(t_{2}-t_{1})}
+\frac{n\alpha K(t_{2}-t_{1})}{2(\alpha-1)}\right\}
\end{equation}
for any $\alpha>1$, $x,y\in\mathrm{\bf{M}}$, and $0<t_{1}<t_{2}$. It is well known that the Harnack inequality is an important and technically challenging result in the study of elliptic and parabolic equations.
The main idea is that the current temperature is bounded by future temperatures locally, with  a necessary time gap.
The first result of this paper is to prove that a reversed statement is also true on all compact manifolds, removing the required time gap in the classical parabolic Harnack inequality.
Backward Harnack inequalities have been found by a number of authors \cite{Garofalo}, \cite{Zhang and Han Qing} and \cite{Cheng Lijuan}. However, various restrictions on the solution such as boundedness or vanishing boundary values or nonlinear pattern  were imposed. In Theorem 3.1 \cite{Hamilton 1993}, a quasi-elliptic type Harnack inequality was also found under certain restrictions such as the integral of the solution at each time is $1$ and the distance is sufficiently small, together with volume non-collapsing condition on the manifold. There is also an error term $\delta$. A few years later, in the paper \cite{W}, an elliptic type Harnack inequality with power like nonlinearity on the solution was found, which  holds on both compact and some noncompact manifolds. See also \cite{ATW}. But when the power goes to one, the constant in the Harnack inequality goes to infinity.

Also, in \cite{Hamilton and Sesum}, Hamilton and Sesum proved an elliptic type Harnack inequality for positive solutions of the conjugate heat equation under the condition that they satisfy Perelman's differential Harnack inequality \cite[Corollary 9.3]{Perelman} and the K\"ahler Ricci flow is of type one with bounded diameter. It is known that not all positive solutions satisfy Perelman's Harnack inequality.

In this paper, we manage to remove all these restrictions for the heat equation on all compact manifolds. In Remark \ref{remark 1.3} below, we will explain that the backward Harnack inequality is optimal in some sense.

The proof of the backward Harnack inequality is actually based on heat kernel bounds in \cite{Peter Li and Yau} and the following gradient estimates for \eqref{def. of heat equation for fixed metric} in
another significant paper \cite{Hamilton 1993} by Hamilton.

\begin{theoremABC}(\cite[Theorem 1.1]{Hamilton 1993})\label{Theorem A}
Let $(\mathrm{\bf{M}}^{n}, g)$ be an n-dimensional compact manifold with $\mathrm{Ric} \geq -Kg$ for some constant $K>0$, and $u=u(x,t) \leq A$ be a positive solution of the heat equation (\ref{def. of heat equation for fixed metric}) on $\mathrm{\bf{M}}\times(0,\infty)$. Then
$$t|\nabla u|^{2} \leq (1+2Kt)u^{2}\ln\left(\frac{A}{u}\right)$$
holds on $\mathrm{\bf{M}}\times(0,\infty)$.
\end{theoremABC}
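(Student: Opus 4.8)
The plan is to run a parabolic maximum principle argument on the auxiliary quantity
$$G:=t\,|\nabla f|^{2}-(1+2Kt)\,f,\qquad f:=\ln\!\left(\tfrac{A}{u}\right),$$
after first reducing to the case $u<A$: if $u(x_{0},t_{0})=A=\sup u$ at an interior point, the strong maximum principle forces $u\equiv A$, so $\nabla u\equiv 0$ and the inequality is trivial; otherwise one may replace $A$ by $A+\delta$, prove the estimate, and let $\delta\to 0^{+}$ at the end. Since $u>0$ solves $\partial_{t}u=\Delta_{g}u$, parabolic regularity makes $u$, hence $f>0$, smooth on $\mathrm{\bf{M}}\times(0,\infty)$, and a direct computation gives the evolution $\partial_{t}f=\Delta f-|\nabla f|^{2}$, equivalently $(\partial_{t}-\Delta)f=-|\nabla f|^{2}$.

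Next I would compute the evolution of $w:=|\nabla f|^{2}$ from the Bochner formula $\Delta|\nabla f|^{2}=2|\nabla^{2}f|^{2}+2\langle\nabla f,\nabla\Delta f\rangle+2\,\mathrm{Ric}(\nabla f,\nabla f)$ together with $\partial_{t}|\nabla f|^{2}=2\langle\nabla f,\nabla\partial_{t}f\rangle$; the $\langle\nabla f,\nabla\Delta f\rangle$ terms cancel, and the hypothesis $\mathrm{Ric}\geq-Kg$ yields
$$(\partial_{t}-\Delta)w\leq-2|\nabla^{2}f|^{2}-2\langle\nabla f,\nabla w\rangle+2Kw.$$
Plugging this and $(\partial_{t}-\Delta)f=-w$ into $(\partial_{t}-\Delta)G$, and using the identity $t\,\nabla w=\nabla G+(1+2Kt)\nabla f$ to convert the first-order term $-2t\langle\nabla f,\nabla w\rangle$ into $-2\langle\nabla f,\nabla G\rangle-2(1+2Kt)w$, one finds that the coefficient $(1+2Kt)$ is tuned exactly so that all terms proportional to $w$ and to $Ktw$ cancel, leaving
$$(\partial_{t}-\Delta)G+2\langle\nabla f,\nabla G\rangle\leq-2t\,|\nabla^{2}f|^{2}-2Kf\leq 0.$$
Thus $G$ is a subsolution of the linear operator $\partial_{t}-\Delta+2\langle\nabla f,\nabla\,\cdot\,\rangle$ with smooth coefficients on $\mathrm{\bf{M}}\times(0,\infty)$.

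Finally I would apply the maximum principle on $\mathrm{\bf{M}}\times[t_{0},T]$ for arbitrary $0<t_{0}<T$. Because the whole computation above is translation-invariant in time, the same differential inequality holds for $\tilde{G}:=(t-t_{0})|\nabla f|^{2}-(1+2K(t-t_{0}))f$, and since $\mathrm{\bf{M}}$ is closed the parabolic boundary of $\mathrm{\bf{M}}\times[t_{0},T]$ is just $\mathrm{\bf{M}}\times\{t_{0}\}$, where $\tilde{G}=-f\leq 0$. Working with $\tilde{G}-\epsilon(1+t-t_{0})$ to exclude a degenerate interior maximum and then letting $\epsilon\to 0^{+}$ gives $\tilde{G}\leq 0$, i.e. $(t-t_{0})|\nabla f|^{2}\leq(1+2K(t-t_{0}))f$; letting $t_{0}\to 0^{+}$ produces $t|\nabla f|^{2}\leq(1+2Kt)f$, which is the assertion after substituting $|\nabla f|^{2}=|\nabla u|^{2}/u^{2}$, $f=\ln(A/u)$ and clearing denominators. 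The one delicate point is the exact algebraic cancellation that makes $(\partial_{t}-\Delta+2\langle\nabla f,\nabla\cdot\rangle)G\leq 0$ with no leftover positive term — this is precisely why $(1+2Kt)$, rather than a generic affine function of $t$, must multiply $f$; the Bochner computation and the maximum principle itself are routine, and the use of $[t_{0},T]$ in place of $[0,T]$ is only to avoid any regularity question as $t\to 0^{+}$, since no hypothesis is imposed at $t=0$.
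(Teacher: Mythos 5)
Your argument is correct; note, though, that the paper does not actually prove Theorem A --- it cites it directly from Hamilton's 1993 paper (only Theorem B is reproved, in Appendix A). Your route is a legitimate variant of Hamilton's original one. I checked the algebra: with $f=\ln(A/u)$ one has $(\partial_t-\Delta)f=-|\nabla f|^2$ and, by Bochner and $\mathrm{Ric}\ge -Kg$, $(\partial_t-\Delta)w\le -2|\nabla^2f|^2+2Kw-2\langle\nabla f,\nabla w\rangle$ for $w=|\nabla f|^2$; then for $G=tw-(1+2Kt)f$ the terms $2w+4Ktw$ produced by $\partial_t(tw)$, by $-(1+2Kt)(\partial_t-\Delta)f$ and by the Ricci term are exactly cancelled by $-2t\langle\nabla f,\nabla w\rangle=-2\langle\nabla f,\nabla G\rangle-2(1+2Kt)w$, leaving $(\partial_t-\Delta)G+2\langle\nabla f,\nabla G\rangle\le -2t|\nabla^2f|^2-2Kf\le 0$ since $f\ge 0$. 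The time-shift to $[t_0,T]$, the $\epsilon$-perturbation, and the limits $t_0\to 0^+$, $\delta\to 0^+$ are all standard and correctly deployed (the $A+\delta$ reduction is in fact not even necessary, since only $f\ge 0$ is used). For comparison, Hamilton works instead with the unlogarithmed quantity $P=\varphi\,|\nabla u|^2/u-u\ln(A/u)$ with $\varphi=t/(1+2Kt)$, using $(\partial_t-\Delta)(|\nabla u|^2/u)\le 2K|\nabla u|^2/u$ and $(\partial_t-\Delta)(u\ln(A/u))=|\nabla u|^2/u$ together with $\varphi'+2K\varphi\le 1$; this avoids the drift term $2\langle\nabla f,\nabla\cdot\rangle$ entirely, at the cost of a slightly less transparent choice of $\varphi$. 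Your version makes the role of the coefficient $1+2Kt$ more visible as an exact cancellation; on a closed manifold the extra smooth drift term causes no difficulty for the maximum principle, so both proofs are equally rigorous.
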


\begin{theoremABC}(\cite[Lemma 4.1]{Hamilton 1993})\label{Theorem B}
Let $(\mathrm{\bf{M}}^{n}, g)$ be an n-dimensional compact manifold with $\mathrm{Ric} \geq -Kg$ for some constant $K>0$, and $u=u(x,t)$ be a positive solution of the heat equation (\ref{def. of heat equation for fixed metric}) with $0<u\leq A$. Then there exists a constant $\theta_{1}$ depending only on $\mathrm{\bf{M}}$ such that
$$t\Delta u \leq \theta_{1}u\left[1+\ln\left(\frac{A}{u}\right)\right]$$
for $t\in[0,1]$.
\end{theoremABC}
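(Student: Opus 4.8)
The plan is to pass to the logarithmic variable, where the estimate becomes a one–sided bound on a Laplacian, and then to feed it back into Theorem~A. Set $v:=\ln(A/u)\ge 0$, which is legitimate since $0<u\le A$. A direct computation gives $\nabla u=-u\nabla v$, $\Delta u=u\big(|\nabla v|^2-\Delta v\big)$, and, using $\partial_t u=\Delta u$, the equation $\partial_t v=\Delta v-|\nabla v|^2$. Theorem~A, rewritten in terms of $v$, says $t|\nabla v|^2\le (1+2Kt)v$, so on $t\in[0,1]$ one has $t\Delta u=u\big(t|\nabla v|^2-t\Delta v\big)\le u\big((1+2K)v-t\Delta v\big)$. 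Hence it suffices to prove a bound of the form $-t\Delta v\le \theta(1+v)$ on $\mathbf{M}\times[0,1]$ for a constant $\theta$ depending only on $n$ and $K$; indeed that gives $t\Delta u\le (1+2K+\theta)\,u\big(1+\ln(A/u)\big)$, which is the claim.

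To obtain $-t\Delta v\le\theta(1+v)$ I would apply the parabolic maximum principle to the auxiliary function
\[
W:=-t\,\Delta v+2t\,|\nabla v|^2-\theta(1+v)\qquad\text{on }\mathbf{M}\times[0,1],
\]
the role of the extra term $2t|\nabla v|^2$ being to reverse the sign of the Hessian term generated by the evolution. Writing $\mathcal{L}:=\partial_t-\Delta+2\nabla v\cdot\nabla$, one checks that $\partial_t v$ solves $\mathcal{L}(\partial_t v)=0$, and then Bochner's formula yields $\mathcal{L}(\Delta v)=\mathcal{L}(|\nabla v|^2)=-2|\nabla^2 v|^2-2\,\mathrm{Ric}(\nabla v,\nabla v)$ and $\mathcal{L}(v)=|\nabla v|^2$. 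Combining these with $\mathcal{L}(tX)=X+t\,\mathcal{L}(X)$ gives, after using $\mathrm{Ric}\ge-Kg$,
\[
\mathcal{L}W\ \le\ -\Delta v-2t\,|\nabla^2 v|^2+(2Kt+2-\theta)\,|\nabla v|^2 .
\]
The decisive point is that the coefficient of $|\nabla^2 v|^2$ is $-2t\le 0$; for the bare quantity $-t\Delta v-\theta(1+v)$ it would be $+2t$, with the wrong sign.

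Now the maximum principle: since $W(\cdot,0)=-\theta(1+v(\cdot,0))<0$, if $\max_{\mathbf{M}\times[0,1]}W>0$ it is attained at some $(x_0,t_0)$ with $t_0\in(0,1]$, where $\nabla W=0$, $\Delta W\le 0$, $\partial_t W\ge 0$, hence $\mathcal{L}W(x_0,t_0)\ge 0$. Choosing $\theta\ge 2K+2$ makes the $|\nabla v|^2$ term $\le 0$, so $\Delta v\le -2t_0|\nabla^2 v|^2\le 0$ at $(x_0,t_0)$; together with the trace inequality $|\nabla^2 v|^2\ge(\Delta v)^2/n$ this forces $-\Delta v\le n/(2t_0)$ there. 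Feeding this together with $t_0|\nabla v|^2\le(1+2K)v$ (Theorem~A) back into $W$ at $(x_0,t_0)$ gives
\[
0<W(x_0,t_0)\ \le\ \tfrac n2+2(1+2K)v-\theta(1+v)\ \le\ \tfrac n2-\theta+(4K+2-\theta)v ,
\]
which is $\le 0$ as soon as $\theta\ge\max\{4K+2,\,n/2\}$ — a contradiction. Therefore $W\le 0$, i.e.\ $-t\Delta v\le\theta(1+v)-2t|\nabla v|^2\le\theta(1+v)$, and the reduction in the first paragraph finishes the proof with $\theta_1:=1+2K+\theta$, depending only on $n$ and $K$, hence only on $\mathbf{M}$.

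I expect the only genuine difficulty to be the choice of the auxiliary function $W$ — specifically, recognizing that adding $+2t|\nabla v|^2$ (whose evolution contributes the favorable $-4t|\nabla^2 v|^2$) turns the net Hessian coefficient negative, after which $|\nabla^2 v|^2\ge(\Delta v)^2/n$ does all the work; without this the evolution inequality cannot be closed. A secondary, routine point is the behavior as $t\to 0^+$: the factor $t$ makes $W$ tend to $-\theta(1+v)\le 0$, so the maximum principle applies on the closed cylinder under the usual regularity of $u$ up to $t=0$; for merely smooth-for-$t>0$ solutions one runs the argument from a slightly positive time and passes to the limit.
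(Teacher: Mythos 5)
Your proof is correct; I checked the evolution identities ($\mathcal{L}v=|\nabla v|^2$, $\mathcal{L}(\Delta v)=\mathcal{L}(|\nabla v|^2)=-2|\nabla^2v|^2-2\,\mathrm{Ric}(\nabla v,\nabla v)$ for $\mathcal{L}=\partial_t-\Delta+2\nabla v\cdot\nabla$), the resulting inequality for $\mathcal{L}W$, and the contradiction at the first interior maximum, and all steps close with $\theta=\max\{4K+2,n/2\}$, $\theta_1=1+2K+\theta$. Your argument shares the same skeleton as the paper's (Appendix A, following Hamilton): both control the combination $\Delta u+|\nabla u|^2/u$ --- note that your $u\bigl(2|\nabla v|^2-\Delta v\bigr)$ is exactly this quantity --- by a parabolic maximum principle, with the decisive good term coming from the trace inequality $|\nabla^2|^2\ge(\mathrm{tr})^2/n$. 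But the execution differs in two substantive ways. First, the paper's proof is self-contained: it never invokes Theorem A, instead handling the first-order terms by the weight $\varphi(t)=\frac{e^{Kt}-1}{Ke^{Kt}}$ (which satisfies $\varphi'+K\varphi=1$ and absorbs the Ricci term exactly) and by a three-case analysis on the ratio $\Delta u\,/\,(|\nabla u|^2/u)$ that plays the quadratic good term against the linear bad one. Your proof instead imports Theorem A twice --- once in the reduction $t|\nabla v|^2\le(1+2K)v$ and once at the maximum point --- and extracts from the Hessian term the clean pointwise bound $-\Delta v\le n/(2t_0)$ there. Second, you work in the logarithmic variable with the bare weight $t$, paying for the Ricci term with a larger constant rather than with $\varphi$. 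What each buys: the paper's route gives the sharper explicit constant $\theta_1=e^K\max\{n,4\}$ and keeps Theorem B logically independent of Theorem A; yours is more mechanical (everything reduces to checking signs of coefficients) and makes transparent why the auxiliary $+2t|\nabla v|^2$ is forced, at the cost of a larger constant and the dependence on Theorem A (harmless here, since Theorem A is established first).
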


Here is the statement of the theorem on the backward Harnack inequality.

\begin{theorem}\label{Thm-reverse Harnack inequality}
Let $(\mathrm{\bf{M}}^{n},g)$ be an n-dimensional compact Riemannian manifold without boundary and $u=u(x,t)$ be a positive solution of the heat equation (\ref{def. of heat equation for fixed metric}) in $\mathrm{\bf{M}}\times(0,1]$. Suppose the Ricci curvature is bounded from below by a non-positive constant $-K$, i.e., $\mathrm{Ric} \geq -K g$. Then for $\theta_{1}=e^{K}\max\{n,4\}$ and another positive constant $\theta_{2}$, depending only on $n$ and $K$, the inequality
$$\frac{u(x,t_2)}{u(z,t_1)} \leq \left(\frac{t_2}{t_1}\right)^{4\theta_{1}\theta_{2}}
\exp\left\{4\theta_{1}\theta_{2}\mathrm{diam^{2}}
\left(\frac{1}{t_1}-\frac{1}{t_2}\right)
+ 2 \sqrt{\theta_2} \left(\sqrt{K}+ \frac{1}{\sqrt{t_2}}\right)  \, \left(   1+ \frac{\mathrm{diam}}{\sqrt{t_2}} \right) \, d(x, z)\right\}$$
holds
for any $x,z \in \mathrm{\bf{M}}$ and $0<t_{1} \leq t_{2} \leq 1$, where $\mathrm{diam}$ is the diameter of $\mathrm{\bf{M}}$.
\end{theorem}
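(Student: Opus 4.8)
The plan is to bound $\ln\big(u(x,t_2)/u(z,t_1)\big)$ by splitting it as
\[
\ln\frac{u(x,t_2)}{u(z,t_1)} = \ln\frac{u(x,t_2)}{u(z,t_2)} + \ln\frac{u(z,t_2)}{u(z,t_1)},
\]
i.e. first move in space at the fixed later time $t_2$, then move backward in time at the fixed point $z$. For the spatial term, I would integrate the gradient estimate along a minimizing geodesic from $z$ to $x$. Hamilton's estimate (Theorem A) gives $t|\nabla u|^2 \le (1+2Kt)u^2\ln(A/u)$ for any upper bound $A$ of $u$ on $\mathbf{M}\times(0,\infty)$; dividing by $u^2$ this reads $|\nabla \ln u|^2 \le \frac{1+2Kt}{t}\ln(A/u)$. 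The subtle point is that $A$ is an a priori global bound, which need not be available or sharp; the remedy is to apply the estimate on a fixed time slice by using the parabolic maximum principle to control $\sup_{\mathbf M\times[t_2/2,\,t_2]} u$ in terms of $u$ at time $t_2$, or more directly to use the heat kernel upper bound of Li--Yau to get $u(y,s) \le C\,u$-type pointwise comparisons. In fact the cleanest route is: the function $v:=\ln(A/u)$ (with $A$ chosen momentarily as $\sup_{\mathbf M\times[t_2/2,t_2]}u$, or even $\sup_{\mathbf M\times(0,1]}u$ after we show $u$ is bounded) satisfies $|\nabla \sqrt{v}\,|^2 \le \frac{1+2Kt}{4t}$ on the relevant slab, so $\sqrt{v}$ is Lipschitz in $x$ with an explicit constant $\le \tfrac12\sqrt{(1+2Kt_2)/t_2} \le \tfrac12(\sqrt K + 1/\sqrt{t_2})$ (using $1\le$ something and $t_2\le 1$). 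Hence, along a geodesic,
\[
\big|\sqrt{v(x,t_2)} - \sqrt{v(z,t_2)}\big| \le \tfrac12\Big(\sqrt K + \tfrac{1}{\sqrt{t_2}}\Big)\,d(x,z),
\]
and squaring/expanding and absorbing $\sqrt{v(z,t_2)}$ (bounded by a diam-over-$\sqrt{t_2}$ term via the same Lipschitz bound centered at the point where $u$ achieves $A$) produces exactly the factor $2\sqrt{\theta_2}(\sqrt K + 1/\sqrt{t_2})(1+\mathrm{diam}/\sqrt{t_2})\,d(x,z)$ in the exponent, for a suitable absolute/$n,K$-dependent $\theta_2$.

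For the time term $\ln\big(u(z,t_2)/u(z,t_1)\big)$ with $t_1\le t_2$, I would use Theorem B: $t\Delta u \le \theta_1 u(1+\ln(A/u))$ on $[0,1]$, which rewritten is $\partial_t u = \Delta u \le \frac{\theta_1}{t}u(1+\ln(A/u))$. Setting $w = \ln(A/u)\ge 0$ (again with $A = \sup u$ on the relevant region, so $w\ge 0$), one gets the ODE-type differential inequality $-\partial_t w \le \frac{\theta_1}{t}(1+w)$ along the fixed spatial point $z$, i.e. $\partial_t\big(\ln(1+w)\big) \ge -\theta_1/t$. Integrating from $t_1$ to $t_2$ gives $\ln\frac{1+w(z,t_1)}{1+w(z,t_2)} \ge -\theta_1\ln(t_2/t_1)$, hence $1+w(z,t_1) \ge (t_1/t_2)^{\theta_1}(1+w(z,t_2))$, which translates into an upper bound for $u(z,t_2)/u(z,t_1) = e^{w(z,t_1)-w(z,t_2)}$. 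To convert this multiplicative bound on $1+w$ into the stated exponential form $(t_2/t_1)^{4\theta_1\theta_2}\exp\{4\theta_1\theta_2\,\mathrm{diam}^2(1/t_1-1/t_2)\}$, I would also need a pointwise upper bound on $w(z,t_2) = \ln(A/u(z,t_2))$, i.e. a \emph{lower} bound on $u(z,t_2)$ relative to $A=\sup u$. This is precisely where the Li--Yau heat kernel estimates from \cite{Peter Li and Yau} enter: for a positive solution of the heat equation on a compact manifold, the Li--Yau two-sided Gaussian bounds on the heat kernel (equivalently, representing $u(\cdot,t) = \int G(\cdot,t;y,t/2)u(y,t/2)\,dy$) yield $u(z,t_2) \ge c\,e^{-C\,\mathrm{diam}^2/t_2}\sup_{\mathbf M} u(\cdot, t_2/2) \ge c\,e^{-C\,\mathrm{diam}^2/t_2} A'$ for an appropriate $A'$, giving $w(z,t_2) \lesssim \theta_2(1 + \mathrm{diam}^2/t_2)$ with constants depending only on $n,K$. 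Combining: $w(z,t_1) \le (t_1/t_2)^{-\theta_1}(1+w(z,t_2)) + (\text{const})$ produces the $(t_2/t_1)^{4\theta_1\theta_2}$ and $\exp\{4\theta_1\theta_2\,\mathrm{diam}^2/t_1\}$ terms after crude estimates, and the $\mathrm{diam}^2/t_2$ piece is absorbed similarly; a careful bookkeeping of which $A$ is used where, and the choice $\theta_1 = e^K\max\{n,4\}$ (the constant from Theorem B made explicit on compact manifolds via its dependence on $\mathbf M$), closes the argument.

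The main obstacle, and the place requiring the most care, is the circular-looking use of the global sup $A = \sup_{\mathbf M\times(0,1]}u$: a priori a positive solution need not be bounded up to $t=0$. The resolution is that we only ever need $A$ as a sup over a \emph{closed} subslab $[\,t_1/2,\,t_2\,]$ or $[t_2/2,t_2]$ bounded away from $0$, where boundedness is automatic by compactness and continuity, and — crucially — Theorems A and B are scale/translation invariant enough (the heat equation on a fixed metric is autonomous in time) to be applied on $\mathbf M\times[t_0,\infty)$ after a time shift, so the "$A$" appearing in them can be taken as the sup over that subslab; the Li--Yau heat kernel bound then controls that sup in terms of values at a single later time, breaking the apparent circularity. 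The second delicate point is purely arithmetic: massaging the two clean inequalities (one from integrating $|\nabla\sqrt{\ln(A/u)}|$ in space, one from integrating $\partial_t\ln(1+\ln(A/u))$ in time) together with the heat-kernel lower bound into the single closed-form inequality with the specific constants $4\theta_1\theta_2$ and $2\sqrt{\theta_2}$ stated — this is routine but must be done with all the $\sqrt K$, $1/\sqrt{t_2}$, $\mathrm{diam}/\sqrt{t_2}$ factors tracked, using $t_2\le 1$ repeatedly to simplify.
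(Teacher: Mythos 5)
Your overall toolkit is the right one (Hamilton's Theorems A and B plus two-sided Gaussian heat-kernel bounds), and your spatial step --- integrating the Lipschitz bound on $\sqrt{\ln(A/u)}$ along a geodesic at the fixed time $t_2$ --- is essentially the paper's Step 2 and is fine. But the time-direction step has a genuine gap that prevents you from reaching the stated inequality. You fix a single $A$ (hence a single $w=\ln(A/u)$) on the whole slab and integrate the differential inequality $\partial_t\ln(1+w)\ge -\theta_1/t$ from $t_1$ to $t_2$. First, the integrated inequality you wrote has the wrong orientation: integration gives $\ln\bigl(1+w(z,t_2)\bigr)-\ln\bigl(1+w(z,t_1)\bigr)\ge -\theta_1\ln(t_2/t_1)$, i.e.\ $1+w(z,t_1)\le (t_2/t_1)^{\theta_1}\bigl(1+w(z,t_2)\bigr)$, not the reverse. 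Second, and more seriously, even the corrected inequality only yields
$w(z,t_1)-w(z,t_2)\le \bigl[(t_2/t_1)^{\theta_1}-1\bigr]\bigl(1+w(z,t_2)\bigr)$, hence
$u(z,t_2)/u(z,t_1)\le \exp\bigl\{C\theta_2\,(t_2/t_1)^{\theta_1}(1+\mathrm{diam}^2/t_2)\bigr\}$.
Since $\theta_1=e^K\max\{n,4\}\ge 4$, the factor $(t_2/t_1)^{\theta_1}$ is polynomially worse in $t_2/t_1$ than the theorem's $\theta_1\theta_2\ln(t_2/t_1)+\theta_1\theta_2\,\mathrm{diam}^2/t_1$ (note $\mathrm{diam}^2/t_1=(t_2/t_1)\cdot\mathrm{diam}^2/t_2$ carries only the first power of $t_2/t_1$). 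So your bound does not imply the statement.

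The missing idea is a sliding localization in time before integrating. For each $T\le 1$ the paper applies Theorem B on the slab $[T/2,T]$ with $A_T=\sup_{\mathbf{M}\times[T/2,T]}$, and immediately substitutes the two-sided kernel bound $\ln(eA_T/G)\le\theta_2(1+\mathrm{diam}^2/t)$ into the resulting inequality, obtaining for $t\in[\tfrac34T,T]$ the \emph{pointwise} differential inequality $\partial_t\ln G\le 4\theta_1\theta_2/t+4\theta_1\theta_2\,\mathrm{diam}^2/t^2$ with the logarithmic term already eliminated. Since $T$ is arbitrary this holds for every $t\in(0,1]$, and only then is it integrated from $t_1$ to $t_2$, producing exactly $4\theta_1\theta_2\ln(t_2/t_1)+4\theta_1\theta_2\,\mathrm{diam}^2(1/t_1-1/t_2)$. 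The point is that the auxiliary sup $A$ must be re-chosen for each time $t$ (so that $\ln(eA/u)$ at time $t$ compares $u$ only to its sup over $[t/2,t]$, which the kernel bounds control); carrying one fixed $w=\ln(A/u)$ through the whole interval forces you to integrate the ODE in $1+w$ and pay the exponential $(t_2/t_1)^{\theta_1}$. A secondary remark: the paper proves everything for the heat kernel $G(\cdot,\cdot;y,0)$ and transfers to general $u$ at the very end via $u(x,t)=\int G(x,t;y,0)u_0(y)\,dy$; your plan to run the argument directly on $u$ can be salvaged (your claimed lower bound $u(z,t_2)\gtrsim e^{-C\mathrm{diam}^2/t_2}\sup_{\mathbf M}u(\cdot,t_2/2)$ does hold, but it needs the intermediate step through $\|u(\cdot,t_2/4)\|_{L^1}$, conservation of mass, and volume comparison, not just the kernel lower bound), yet the kernel-first route avoids this entirely.
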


\begin{remark}
Particularly, if the manifold has nonnegative Ricci curvature, then the constant $\theta_{2}=1+n+8n^{2}+\ln\left(2^{3n}e^{4}(2\pi)^{\frac{n}{2}}\frac{n}{\alpha_{n-1}}\right)$ in Theorem \ref{Thm-reverse Harnack inequality}, where $\alpha_{n-1}$ is the area of the unit $(n-1)$ - sphere. See Appendix \ref{appendix B} for the detailed computation.
\end{remark}

\begin{remark}\label{remark 1.3}
Notice that the backward Harnack inequality still holds when $t_1=t_2$, allowing us to compare values of the solution at the same time. Also the inequality becomes an equality when $x=z$ and $t_2=t_1$.  In contrast, when $t_1=t_2$, the usual Harnack inequality \eqref{forward Harnack ineq.} becomes vacuous since the right-hand side becomes infinity unless $x=y$.
In addition, the appearance of $1/t_1$ on the right-hand side is necessary in view of the heat kernel. We mention that the Harnack inequality at the same time level on compact manifolds should be known to researchers in probability theory since the necessary spatial gradient bound for the heat kernel have been proven in \cite{Sheu}. See also \cite{Hsu} and \cite{Stroock}. However, these authors did not write out the dependence of their bounds on geometric invariance such as Ricci curvature, volume of geodesic balls or diameters. The explicit dependence was later given in \cite{Kots} and \cite{LiZz}.
The backward Harnack inequality (for $t_2>t_1$ case) seems new.

Naturally one wonders if the backward Harnack inequality is true for local solutions or on noncompact manifolds.
As explained in \cite{Garofalo} p544,  in general, the backward Harnack inequality does not hold for local solutions on compact domains. The backward Harnack inequality in current form with $\mathrm{diam}$ replaced by $d(x, z)$  is also not true on noncompact manifolds such as $\mathbb{R}^n$, as can be seen from the following explicit example.
By direct computation, the function
\[
u=(1-t)^{-1/2} e^{\frac{x^2}{4 (1-t)}}
\]is a positive solution to the heat equation in $\mathbb{R} \times [0, 1)$.
Note that the solution blows up at $t=1$. Also $u(x, t)/u(0, t)= e^{\frac{x^2}{4 (1-t)} } $, which is not dominated by any $e^{c x^2/t}$ as $t \to 1^-$. Here $c$ is a positive constant.

The backward Harnack inequality can also be regarded as a backward version of the Bakry-Qian estimate
\cite{Bakry and Qian Zhongmin} Theorem 9 which infers $\partial_t u /u \ge -C/t$,  as an application  of the Li-Yau gradient estimate.
A combination of the two allows us to compare values of positive solutions of the heat equation on compact manifolds at any two space time points.

\end{remark}

\subsection{Gradient estimates under the Ricci flow}

On a closed Riemannian manifold $\mathrm{\bf{M}}$, when the metrics $g(t)$ evolve by the Ricci flow (see \cite{Hamilton 1982} or \cite{Hamilton Ricci flow})
\begin{equation}\label{def. of Ricci flow}
\partial_{t}g=-2\mathrm{Ric},
\end{equation}
G. Perelman \cite{Perelman} obtained the following gradient estimate for the fundamental solution of the conjugate heat equation
\begin{equation}\label{def. of conjugate heat eq.}
\Delta_{g(t)}u+\partial_{t}u-R_{g(t)}u=0
\end{equation}
 without any curvature assumptions. Here $R=R_{g(t)}$ is the scalar curvature under the metric $g(t)$.

 Fixing $x_0 \in \mathbf{M}$,
let $u=G(x, \tau; x_0, 0)$ be the positive fundamental solution of (\ref{def. of conjugate heat eq.}) in $\mathrm{\bf{M}}\times[0,T]$ and $f$ be the function such that $u=(4\pi\tau)^{-\frac{n}{2}}e^{-f}$ with $\tau=T-t$. Then the following inequality holds
$$(\tau(2\Delta f-|\nabla f|^{2}+R)+f-n)u \leq 0,$$
or equivalently,
$$\frac{|\nabla u|^{2}}{u^{2}}-2\frac{\partial_{\tau}u}{u}-R \leq \frac{n}{\tau}+\frac{\ln u}{\tau}+\frac{n\ln(4\pi\tau)}{2\tau}.$$
This formula can be regarded as a generalization of the Li-Yau gradient estimate for the heat equation (\ref{def. of heat equation for fixed metric}). But it may not hold for any positive solutions. Subsequently, S. L. Kuang and Q. S. Zhang \cite{Kuang and Zhang} established a gradient estimate that works for all positive solutions of the conjugate heat equation (\ref{def. of conjugate heat eq.}) under the Ricci flow (\ref{def. of Ricci flow}) on a closed manifold, with some restrictions on the time interval. The second goal of the paper is to remove this restriction, without any curvature assumptions. Related results on the forward heat equation can be found in \cite{Zhang and Zhu Meng} and \cite{Song-Wu-Zhu}.

\begin{theorem}\label{Kuang-Zhang Thm1-1}
Let $(\mathrm{\bf{M}}^{n},g(t)), t\in[0,T]$, be a smooth solution to the Ricci flow (\ref{def. of Ricci flow}) on an n-dimensional closed Riemannian manifold, and $u: \mathrm{\bf{M}}\times[0,T)\rightarrow(0,\infty)$ be a positive $C^{2,1}$ solution to the conjugate heat equation (\ref{def. of conjugate heat eq.}). Assume $f$ is the function such that $u=(4\pi\tau)^{-\frac{n}{2}}e^{-f}$ with $\tau=T-t$. Then the following inequality
\begin{equation*}
2\Delta f-|\nabla f|^{2}+R\leq \frac{2n}{t}+\frac{2n}{\tau}
\end{equation*}
holds for all $t\in(0,T)$ and all points.
\end{theorem}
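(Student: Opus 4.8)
The plan is to recast the inequality as a differential Harnack inequality for $f$ and then run a maximum principle against the barrier $2n\big(\tfrac1t+\tfrac1\tau\big)$. I would first reduce to the case of $u$ smooth on the \emph{closed} interval $\mathrm{\bf{M}}\times[0,T]$: for general $u$, its restriction to $\mathrm{\bf{M}}\times[0,T-s]$ solves the conjugate heat equation for the restricted Ricci flow, and the associated $f$-functions differ only by a function of time, so $F:=2\Delta f-|\nabla f|^2+R=-2\Delta\log u-|\nabla\log u|^2+R$ is unchanged; applying the closed-interval case and letting $s\to0^+$ recovers the general statement. For $u$ smooth on $\mathrm{\bf{M}}\times[0,T]$ the function $F$ is bounded there. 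Writing $u=(4\pi\tau)^{-n/2}e^{-f}$, $\tau=T-t$, the equation gives $\partial_\tau f=\Delta f-|\nabla f|^2+R-\tfrac n{2\tau}$; with $\mathcal{L}:=\partial_\tau-\Delta+2\nabla f\cdot\nabla$ one then has $\mathcal{L}f=|\nabla f|^2+R-\tfrac n{2\tau}$ and $\mathcal{L}\tau=1$.

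The main computation is a differential inequality for $F$. By Perelman's pointwise computation \cite{Perelman}, $\mathcal{L}\big(\tau F+f-n\big)=-2\tau\big|R_{ij}+\nabla_i\nabla_j f-\tfrac1{2\tau}g_{ij}\big|^2$. Subtracting $\mathcal{L}(f-n)$, using $\mathcal{L}(\tau F)=F+\tau\mathcal{L}F$ and $-|\nabla f|^2-R-F=-2(\Delta f+R)$, and then estimating the squared tensor below by $\tfrac1n$ times the square of its trace $R+\Delta f-\tfrac n{2\tau}$, all $\tfrac n{2\tau}$-terms and all terms linear in $R+\Delta f$ cancel, leaving
\[
\mathcal{L}F\le-\tfrac2n\,(R+\Delta f)^2 .
\]
Besides this I would use $R+\Delta f=\tfrac12\big(F+|\nabla f|^2+R\big)\ge\tfrac12(F+R)$ and, crucially, the curvature-free bound $R(\cdot,t)\ge-\tfrac n{2t}$ for $t\in(0,T]$: it follows from $\partial_tR=\Delta R+2|\mathrm{Ric}|^2\ge\Delta R+\tfrac2nR^2$ and the scalar maximum principle with \emph{no} assumption on $g(0)$, and it is precisely what makes the hypothesis-free conclusion possible.

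For the maximum principle, fix $\delta>0$ and set $G_\delta:=F-2n\big(\tfrac1t+\tfrac1\tau\big)-\tfrac\delta\tau$. Since $F$ is bounded while $\tfrac{2n}t\to\infty$ as $t\to0^+$ and $\tfrac{2n+\delta}\tau\to\infty$ as $\tau\to0^+$, $G_\delta\to-\infty$ at both ends and attains a maximum at some interior $(x_0,\tau_0)$. Suppose $G_\delta(x_0,\tau_0)>0$. There $\nabla G_\delta=0$, $\Delta G_\delta\le0$, $\partial_\tau G_\delta\ge0$, so $\mathcal{L}G_\delta\ge0$; with $\mathcal{L}\big(\tfrac1t+\tfrac1\tau\big)=\tfrac1{t^2}-\tfrac1{\tau^2}$ and $\mathcal{L}\tfrac1\tau=-\tfrac1{\tau^2}$ this forces $\mathcal{L}F\ge2n\big(\tfrac1{t^2}-\tfrac1{\tau^2}\big)-\tfrac\delta{\tau^2}$. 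Meanwhile $F>2n\big(\tfrac1t+\tfrac1\tau\big)$ and $R\ge-\tfrac n{2t}$ give $R+\Delta f>\tfrac{3n}{4t}+\tfrac n\tau>0$, so $\mathcal{L}F\le-\tfrac2n(R+\Delta f)^2<-\tfrac{9n}{8t^2}-\tfrac{2n}{\tau^2}$. Comparing the two estimates of $\mathcal{L}F$ yields $\tfrac{25n}{8t^2}<\tfrac\delta{\tau^2}$, hence $\tau_0\le T\sqrt{8\delta/(25n)}$; but then $F(x_0,\tau_0)>\tfrac{2n}{\tau_0}\ge\tfrac{2n}{T}\sqrt{25n/(8\delta)}$, which exceeds $\sup_{\mathrm{\bf{M}}\times[0,T]}F$ once $\delta$ is chosen small --- a contradiction. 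Therefore $G_\delta\le0$ for all small $\delta>0$; letting $\delta\to0$, and then undoing the reduction, gives $2\Delta f-|\nabla f|^2+R\le\tfrac{2n}t+\tfrac{2n}\tau$.

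The step I expect to be the real work is deriving $\mathcal{L}F\le-\tfrac2n(R+\Delta f)^2$: Perelman's identity, the trace (Cauchy--Schwarz) inequality, and the exact form of the $f$-equation have to interlock so that every lower-order term cancels, and the sign conventions for $\mathcal{L}$ and for $t$ versus $\tau$ must be tracked meticulously. The reduction to solutions regular up to $\tau=0$ together with the auxiliary term $\delta/\tau$ is forced on us because $\tau=0$ is the initial time of the conjugate heat equation and lies outside the given interval, so the maximum principle has no boundary data there to exploit; it is a soft point, but it is where a careless version of the argument would break down.
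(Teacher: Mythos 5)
Your proposal is correct and follows essentially the same route as the paper: the identical key differential inequality $(\partial_\tau-\Delta+2\nabla f\cdot\nabla)\,q\le-\frac{2}{n}(R+\Delta f)^2$ for $q=2\Delta f-|\nabla f|^2+R$, the same curvature-free lower bound $R\ge-\frac{n}{2t}$, and a maximum-principle comparison against the barrier $2n\left(\frac{1}{t}+\frac{1}{\tau}\right)$. The only differences are in implementation: you obtain the differential inequality from Perelman's identity rather than by direct computation of the evolution equations, and you argue by a pointwise contradiction at the maximum of $F-2n\left(\frac{1}{t}+\frac{1}{\tau}\right)-\frac{\delta}{\tau}$, whereas the paper regularizes the barrier with $\epsilon$ at both endpoints and runs a three-case analysis to produce a linear inequality with a bounded potential $V$ before citing the scalar maximum principle.
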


\begin{remark}
In Theorem \ref{Kuang-Zhang Thm1-1}, the appearance of $\frac{1}{t}$ on the right-hand side is necessary since no assumption is made on the initial manifold whose curvature can be arbitrary. The above inequality is equivalent to
\[
\frac{|\nabla u|^{2}}{u^{2}}-2\frac{\partial_{\tau}u}{u}-R\leq\frac{2n}{t}+\frac{2n}{\tau}.
\]

\end{remark}

Observe that for each $t$, at the infimum point of $f(\cdot, t)$, we have $\nabla f =0$ and $\Delta f \ge 0$. Therefore, the theorem yields the following application.

\begin{corollary}
Let $(\mathrm{\bf{M}}^{n},g(t)), t\in[0,T]$, be a smooth solution to the Ricci flow (\ref{def. of Ricci flow}) on an n-dimensional closed Riemannian manifold, then the scalar curvature $R$ satisfies
\[
\inf \, R(\cdot, t) \le \frac{2n}{t}+\frac{2n}{T-t}.
\]

\end{corollary}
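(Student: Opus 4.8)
The plan is to apply Theorem~\ref{Kuang-Zhang Thm1-1} at the spatial minimum point of $f(\cdot,t)$, precisely along the lines of the observation preceding the statement. First I would fix $x_0\in\mathrm{\bf{M}}$ and let $u=G(x,\tau;x_0,0)$ be the positive fundamental solution of the conjugate heat equation \eqref{def. of conjugate heat eq.} on $\mathrm{\bf{M}}\times[0,T)$; such a positive $C^{2,1}$ solution exists on any closed manifold, so Theorem~\ref{Kuang-Zhang Thm1-1} applies to it. (Alternatively, one may use any positive solution of \eqref{def. of conjugate heat eq.}, e.g.\ the one obtained by solving it backward from a positive datum at $t=T$, positivity being preserved by the maximum principle despite the zeroth order term $-Ru$.) Writing $u=(4\pi\tau)^{-n/2}e^{-f}$ with $\tau=T-t$, the function $f(\cdot,t)$ is $C^2$ in the space variable on the compact manifold $\mathrm{\bf{M}}$, hence attains its infimum at some point $p_t\in\mathrm{\bf{M}}$.

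Next, fix $t\in(0,T)$. At the minimum point $p_t$ of $f(\cdot,t)$ one has $\nabla f(p_t,t)=0$ and, by the second derivative test at an interior minimum on the closed manifold, $\Delta f(p_t,t)\ge 0$. Substituting the point $(p_t,t)$ into the inequality of Theorem~\ref{Kuang-Zhang Thm1-1} gives
\[
2\Delta f(p_t,t)-|\nabla f(p_t,t)|^{2}+R(p_t,t)\le \frac{2n}{t}+\frac{2n}{T-t}.
\]
Discarding the nonnegative term $2\Delta f(p_t,t)$ and using $|\nabla f(p_t,t)|^{2}=0$ yields $R(p_t,t)\le \frac{2n}{t}+\frac{2n}{T-t}$, and therefore
\[
\inf\,R(\cdot,t)\le R(p_t,t)\le \frac{2n}{t}+\frac{2n}{T-t},
\]
which is the asserted bound.

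Since Theorem~\ref{Kuang-Zhang Thm1-1} is already in hand, there is essentially no genuine obstacle here; the argument is an immediate substitution. The only points deserving a word of justification are (i) that there exists a positive $C^{2,1}$ solution of \eqref{def. of conjugate heat eq.} on $\mathrm{\bf{M}}\times[0,T)$ to which the theorem may be applied — supplied by the fundamental solution — and (ii) that $f(\cdot,t)$, being at least $C^2$ in space on a compact manifold, attains its infimum, so that the first- and second-order conditions $\nabla f=0$ and $\Delta f\ge 0$ are legitimately available at that point. Both are standard.
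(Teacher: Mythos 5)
Your argument is exactly the paper's: apply Theorem \ref{Kuang-Zhang Thm1-1} to a positive solution (e.g.\ a fundamental solution of the conjugate heat equation) at the spatial minimum point of $f(\cdot,t)$, where $\nabla f=0$ and $\Delta f\ge 0$, and discard the nonnegative terms to bound $R$ there. This is correct and matches the one-line observation the paper gives before the corollary.
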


\subsection{Matrix estimates under the Ricci flow}
Now we turn to matrix gradient estimates for heat-type equations.
In \cite{Hamilton 1993}, R. S. Hamilton not only proved Theorems \ref{Theorem A} and \ref{Theorem B}, but also applied Theorems \ref{Theorem A} and \ref{Theorem B} to prove the following matrix Harnack estimate.

\begin{theoremABC}(\cite[Corollary 4.4]{Hamilton 1993})\label{Theorem C}
Let $(\mathrm{\bf{M}}^{n}, g)$ be a compact n-dimensional manifold with non-negative sectional curvature and parallel Ricci curvature. Then the matrix estimate $$\nabla_{i}\nabla_{j}\ln u+\frac{1}{2t}g_{ij} \geq 0$$ is satisfied by any positive solution $u$ of the heat equation (\ref{def. of heat equation for fixed metric}).
\end{theoremABC}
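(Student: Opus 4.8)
\emph{Proof sketch.} Write $f=\log u$, so $\partial_t f=\Delta f+|\nabla f|^2$, and set $f_{ij}:=\nabla_i\nabla_j f$ and $h_{ij}:=f_{ij}+\frac{1}{2t}g_{ij}$; the assertion is $h\ge 0$ on $\mathbf{M}\times(0,\infty)$. The plan is to derive a tensor evolution equation for $h$ and invoke Hamilton's tensor maximum principle. Differentiating $\partial_t f=\Delta f+|\nabla f|^2$ twice in space and commuting covariant derivatives by the Bochner formula for Hessians together with the identity $\nabla_i\nabla_j\nabla_k f=\nabla_k f_{ij}-R_{ikjl}\nabla_l f$, one obtains, schematically,
\[
\partial_t f_{ij}=\Delta f_{ij}+2\nabla_k f\,\nabla_k f_{ij}+2 f_{ik}f_{kj}+Q_{ij},
\]
where $Q_{ij}$ gathers the curvature terms $R_{ikjl}f_{kl}$, $R_{ik}f_{kj}$, $R_{ikjl}\nabla_k f\,\nabla_l f$ and a group of terms linear in $\nabla\mathrm{Ric}$. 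The parallel-Ricci hypothesis annihilates the $\nabla\mathrm{Ric}$-group. Using $\partial_t h_{ij}=\partial_t f_{ij}-\frac{1}{2t^2}g_{ij}$ and substituting $f_{ij}=h_{ij}-\frac{1}{2t}g_{ij}$ throughout, the $t^{-2}g$-terms cancel identically --- this cancellation is precisely what pins down the constant $\tfrac12$ --- leaving
\[
\big(\partial_t-\Delta-2\nabla_k f\,\nabla_k\big)h_{ij}=2h_{ik}h_{kj}-\frac{2}{t}h_{ij}+\widetilde Q_{ij},
\]
with $\widetilde Q$ a reaction term in the curvature tensor, $h$, and $\nabla f$.

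Next I would check the null-eigenvector condition of Hamilton's tensor maximum principle: at any point where $h\ge 0$ and $\xi$ is a null eigenvector, $h_{ij}\xi_j=0$, so $h_{ik}h_{kj}\xi_i\xi_j=|h\xi|^2=0$ and $h_{ij}\xi_i\xi_j=0$, and it remains only to show $\widetilde Q_{ij}\xi_i\xi_j\ge 0$. Here $f_{ij}\xi_j=-\frac{1}{2t}\xi_i$, and writing $h=\sum_a\mu_a\,v_a\otimes v_a$ with $\mu_a\ge 0$ one finds that the Ricci contractions in $\widetilde Q(\xi,\xi)$ cancel, leaving a non-negative combination of sectional curvatures of the shape $2\sum_a\mu_a\,\mathrm{sec}(\xi,v_a)|\xi\wedge v_a|^2+2\,\mathrm{sec}(\xi,\nabla f)|\xi\wedge\nabla f|^2\ge 0$, using non-negativity of the sectional curvature. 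With this inequality in hand, the tensor maximum principle (applicable since $\mathbf{M}$ is compact and, on compact subintervals of $(0,\infty)$, the drift and reaction are smooth) propagates $h\ge 0$: if $h(\cdot,\epsilon)\ge 0$ then $h\ge 0$ on $\mathbf{M}\times[\epsilon,T]$.

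It remains to initialize the argument near $t=0$. If $u$ extends smoothly to $t=0$, then $f_{ij}$ is bounded on $\mathbf{M}\times[0,\epsilon_0]$, hence $h_{ij}=f_{ij}+\frac{1}{2t}g_{ij}\ge 0$ for all small $t>0$, and the previous paragraph gives $h\ge 0$ on all of $\mathbf{M}\times(0,\infty)$. For an arbitrary positive solution $u$ on $\mathbf{M}\times(0,\infty)$ and any $t_1>0$, apply this to the time-translate $v(x,s):=u(x,s+t_1)$, which is smooth up to $s=0$; this yields $\nabla_i\nabla_j\log u(\cdot,t)+\frac{1}{2(t-t_1)}g_{ij}\ge 0$ for $t>t_1$, and letting $t_1\downarrow 0$ (with $t$ fixed) gives the theorem. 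Hamilton's original route avoids the time-translation by instead using Theorems~\ref{Theorem A} and \ref{Theorem B} to control $|\nabla\log u|$ and $\Delta\log u$ as $t\to 0$.

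The \emph{main obstacle} is the curvature computation: organizing the Bochner and commutation terms correctly, and then verifying that on a null eigenvector of $h$ the surviving curvature expression really is a sum of non-negative sectional-curvature contributions --- this is exactly the step that uses both hypotheses (parallel Ricci to kill the $\nabla\mathrm{Ric}$ terms, non-negative sectional curvature to sign the remainder). A secondary point is justifying the tensor maximum principle up to $t=0$, handled by the time-translation-and-limit device above (or, following Hamilton, by Theorems~A and B).
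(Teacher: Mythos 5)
The paper never proves Theorem \ref{Theorem C}; it is quoted verbatim from \cite[Corollary 4.4]{Hamilton 1993}, so the only in-house comparison is with the proof of the Ricci-flow analogue, Theorem \ref{matric heat eq.-Thm}, in Section \ref{section 4}. Your argument is correct and is essentially that proof specialized to a fixed metric: the same evolution equation for $\nabla_i\nabla_j\ln u$, the same cancellation of the $t^{-2}g_{ij}$ terms that pins down the constant $\tfrac12$, and the same null-eigenvector check in which the surviving reaction is a non-negative combination of sectional curvatures. (One clarification: the Ricci terms disappear for two distinct reasons --- the $\frac{1}{t}R_{ij}$ contributions coming from $2R_{ikjl}\cdot\frac{1}{2t}g_{kl}$ and from $-R_{ik}f_{kj}-R_{jk}f_{ik}$ cancel algebraically, while the remaining contractions $R_{ik}h_{kj}\xi_i\xi_j$ vanish because $h\xi=0$; ``the Ricci contractions cancel'' conflates these.) Where you genuinely depart from Hamilton's original route --- and this is exactly what the paper highlights when it says Hamilton ``applied Theorems \ref{Theorem A} and \ref{Theorem B}'' to obtain Corollary 4.4 --- is the initialization as $t\to0^+$: Hamilton needs his a priori bounds on $t|\nabla u|^2$ and $t\Delta u$ because he starts the tensor maximum principle at $t=0$ for a solution that need not be smooth there, whereas your time-translation $v(x,s)=u(x,s+t_1)$ followed by $t_1\downarrow 0$ exploits parabolic smoothing on the compact manifold and sidesteps those estimates entirely. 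This is the cleaner modern substitute; the trade-off is that Hamilton's auxiliary bounds carry independent content (the present paper's Theorem \ref{Thm-reverse Harnack inequality} is built on them), while the translation trick yields nothing beyond the stated estimate. The one step you should make explicit is the strictness needed for the first-touching-time argument, supplied by an $\varepsilon$-perturbation of $h$ exactly as in the paper's $Z_\varepsilon=Z+\varepsilon(1+t)g(t_0)$ in Section \ref{section 4}.
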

It is noted that this result generalizes the Li-Yau estimate to a full matrix version under stronger curvature assumptions. Later on, B. Chow and R. S. Hamilton \cite{Bennett and Hamilton-Harnack} further extended the Li-Yau estimate and Theorem \ref{Theorem C} to the constrained case under the same curvature assumptions, and discovered new linear Harnack estimates. Analogous matrix estimate was obtained by H.-D. Cao and L. Ni \cite{Cao Huaidong and Ni Lei-Kahler} on K\"{a}hler manifolds with non-negative bisectional curvature, which is called a matrix Li-Yau-Hamilton estimate. Additionally, Q. Han and one of us \cite{Zhang and Han Qing} investigated global and local upper bounds for the Hessian of log positive solutions of the heat equation on a Riemannian manifold.

When $(\mathrm{\bf{M}}^{n}, g(t))$ is a complete solution to the K\"{a}hler-Ricci flow with bounded non-negative bisectional curvature, L. Ni \cite{Ni Lei-kahler} proved that a positive solution $u$ of the forward conjugate heat equation
\begin{equation}\label{forward conjugate eq}
(\Delta_{g(t)}+R_{g(t)}-\partial_{t})u=0
\end{equation}
satisfies the estimate
$$
R_{\alpha\bar{\beta}}+\nabla_{\alpha}\nabla_{\bar{\beta}}\ln u+\frac{1}{t}g_{\alpha\bar{\beta}} \geq 0.
$$
The equality holds if and only if $(\mathrm{\bf{M}}^{n}, g(t))$ is an expanding K\"{a}hler-Ricci soliton. These results were generalized to the constrained case by X.-A. Ren, S. Yao, L.-J. Shen, and G.-Y. Zhang \cite{Ren Xin-an 2015}.

Recently, X. L. Li and one of us \cite{Zhang Qi and Li Xiaolong} proved new matrix Li-Yau-Hamilton estimates for positive solutions to the (forward) heat equation
\begin{equation}\label{def. of heat eq.}
\partial_{t}u-\Delta_{g(t)}u=0
\end{equation}
and the conjugate heat equation (\ref{def. of conjugate heat eq.}) coupled with the Ricci flow (\ref{def. of Ricci flow}). Specifically, if $(\mathrm{\bf{M}}^{n}, g(t))$ has non-negative sectional curvature and $\mathrm{Ric} \leq \kappa g$ for some constant $\kappa > 0$, then positive solutions $u$ of the heat equation (\ref{def. of heat eq.}) obey the following estimate
$$\nabla_{i}\nabla_{j}\ln u+\frac{\kappa}{1-e^{-2\kappa t}}g_{ij} \geq 0.$$
Similarly, all positive solutions $u$ of the conjugate heat equation (\ref{def. of conjugate heat eq.}) satisfy
$$R_{ij}-\nabla_{i}\nabla_{j}\ln u-\eta g_{ij} \leq 0$$
provided that $(\mathrm{\bf{M}}^{n}, g(t))$ has non-negative complex sectional curvature and $\mathrm{Ric} \leq \kappa g$ for some $\kappa > 0$, where $\eta$ is a time-dependent function satisfying the inequality $\eta' \leq 2\eta^{2}-2\kappa\eta-\frac{\kappa}{t}$. They also used these estimates to study the monotonicity of various parabolic frequencies. In \cite{Li Xiaolong and Ren Xin-An 2025}, X. L. Li, H.-Y. Liu, and X.-A. Ren proved analogous results under the K\"{a}hler-Ricci flow. Subsequently, S. Yao, H.-Y. Liu, and X.-A. Ren \cite{Yao and Liu and Ren} derived the matrix Li-Yau-Hamilton estimate for positive solutions to some nonlinear heat equations under the Ricci flow (\ref{def. of Ricci flow}).

Although it would be desirable to remove the upper bound condition on the Ricci curvature, we can only replace it by the type III curvature condition so far.

\begin{theorem}\label{matric heat eq.-Thm}

Let $(\mathrm{\bf{M}}^n, g(t))$, $t \in [0,T]$, $T>0$, be a solution to the Ricci flow (\ref{def. of Ricci flow}) on an n-dimensional compact Riemannian manifold. Assume that $u: \mathrm{\bf{M}} \times [0,T] \to \mathbb{R}$ is a positive solution to the heat equation (\ref{def. of heat eq.}).

Suppose that $(\mathrm{\bf{M}}^n, g(t))$ has non-negative sectional curvature  and  the Ricci curvature satisfies the type III condition $|\mathrm{Ric} (\cdot, t)| \le \frac{\sigma}{t}$ for some constant $\sigma$,
then
$$\nabla_i \nabla_j \ln u + \frac{\beta}{t} g_{ij} \geq 0$$
for all $(x,t) \in \mathrm{\bf{M}} \times (0,T)$ and any constant $\beta \geq \frac{1+2\sigma}{2}$.

\end{theorem}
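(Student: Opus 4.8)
The plan is to run a tensor maximum principle on
\[
H_{ij} := \nabla_i\nabla_j \ln u + \frac{\beta}{t}\, g_{ij},
\]
and to show that $H_{ij}\ge 0$ on $\mathbf{M}\times(0,T)$, which is exactly the assertion. Put $f=\ln u$. Since $u$ solves $\partial_t u=\Delta_{g(t)}u$ along the Ricci flow, $f$ satisfies
\[
\partial_t f-\Delta f = |\nabla f|^{2}.
\]

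First I would derive the evolution equation for $h_{ij}:=\nabla_i\nabla_j f$. Combining $\partial_t h_{ij}=\nabla_i\nabla_j(\partial_t f)-(\partial_t\Gamma^{k}_{ij})\nabla_k f$ with the Ricci-flow identity $\partial_t\Gamma^{k}_{ij}=-g^{kl}(\nabla_iR_{jl}+\nabla_jR_{il}-\nabla_lR_{ij})$, the Bochner-type commutation formulas for $\Delta(\nabla_i\nabla_j f)$ and for $\nabla_i\nabla_j\nabla_k f$, and the equation for $f$, one is led to an identity of the schematic form
\[
(\partial_t-\Delta)h_{ij}=2\langle\nabla f,\nabla h_{ij}\rangle+2\,h_{ik}h^{k}{}_{j}+(\mathrm{Rm}\ast h)_{ij}+(\mathrm{Rm}\ast\nabla f\ast\nabla f)_{ij}-(\mathrm{Ric}\ast h)_{ij},
\]
where, crucially, the $\nabla\mathrm{Rm}\ast\nabla f$ terms produced by the Bochner formula are cancelled exactly by the contribution of $\partial_t\Gamma^{k}_{ij}$; this cancellation is precisely what makes the coupling with the Ricci flow work. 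Substituting $h_{ij}=H_{ij}-\tfrac{\beta}{t}g_{ij}$ throughout and adding $(\partial_t-\Delta)(\tfrac{\beta}{t}g_{ij})=-\tfrac{\beta}{t^{2}}g_{ij}-\tfrac{2\beta}{t}R_{ij}$ (using $\partial_t g_{ij}=-2R_{ij}$ and $\Delta g_{ij}=0$) yields an evolution equation $(\partial_t-\Delta)H_{ij}=2\langle\nabla f,\nabla H_{ij}\rangle+\Phi_{ij}$, with $\Phi_{ij}$ free of derivatives of $H$.

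Next I would apply Hamilton's tensor maximum principle for evolving metrics. Since $\nabla^{2}\ln u$ stays bounded as $t\to 0^{+}$ on the compact manifold while $\tfrac{\beta}{t}g\to+\infty$, we have $H_{ij}>0$ for all small $t$; were the conclusion false, there would be a first time $t_0\in(0,T)$, a point $x_0$ and a unit vector $v$ with $H_{ij}(x_0,t_0)v^{j}=0$. Extending $v$ by parallel transport so that $\nabla v=0$ at $(x_0,t_0)$, the drift term drops out there (as $x_0$ is a spatial minimum of $H_{ij}v^{i}v^{j}$), and one computes, with $\phi(t)=\beta/t$,
\[
(\partial_t-\Delta)\!\left(H_{ij}v^{i}v^{j}\right)=2\phi^{2}+\phi'-2\phi\,\mathrm{Ric}(v,v)+\mathcal{R},
\]
where $\mathcal{R}$ gathers the Riemann-curvature terms evaluated on $v$ and on $\nabla f$. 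Because $H\ge0$ at $(x_0,t_0)$ and the sectional curvature is non-negative, $\mathcal{R}\ge0$: it is a sum of terms of the form $R(v,\cdot,\cdot,v)$ with non-negative coefficients. The type III hypothesis gives $|\mathrm{Ric}(v,v)|\le\sigma/t$, hence $-2\phi\,\mathrm{Ric}(v,v)\ge-2\beta\sigma/t^{2}$, and since $\phi'=-\beta/t^{2}$ and $2\phi^{2}=2\beta^{2}/t^{2}$,
\[
(\partial_t-\Delta)\!\left(H_{ij}v^{i}v^{j}\right)\ \ge\ \frac{2\beta^{2}-\beta-2\beta\sigma}{t^{2}}\ =\ \frac{\beta\,(2\beta-1-2\sigma)}{t^{2}}\ \ge\ 0,
\]
the last inequality being exactly the condition $\beta\ge\frac{1+2\sigma}{2}$. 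If $\beta>\frac{1+2\sigma}{2}$ this is strict, contradicting the requirement $(\partial_t-\Delta)(H_{ij}v^{i}v^{j})\le0$ at a first zero of the least eigenvalue; hence $H\ge0$ on $(0,T)$. The borderline case $\beta=\frac{1+2\sigma}{2}$ follows by running the argument with $\beta+\varepsilon$ and letting $\varepsilon\downarrow0$.

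I expect the main obstacle to be the first step: carrying out the Bochner-type computation for $(\partial_t-\Delta)h_{ij}$ under the Ricci flow with all curvature terms and signs correct, verifying the cancellation of the $\nabla\mathrm{Rm}\ast\nabla f$ terms, and checking that after the substitution $h=H-\tfrac{\beta}{t}g$ the surviving $\mathrm{Ric}$-terms together with $(\partial_t-\Delta)(\tfrac{\beta}{t}g_{ij})$ combine to produce exactly the numerator $2\beta-1-2\sigma$. A secondary, more routine point is the rigorous use of the tensor maximum principle on a time-dependent metric together with the $t\to0^{+}$ initial behaviour, which is handled by the standard perturbation-and-limit device.
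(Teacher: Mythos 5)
Your proposal is correct and follows essentially the same route as the paper: the same auxiliary tensor $\nabla_i\nabla_j\ln u+\frac{\beta}{t}g_{ij}$, the same evolution equation (which the paper imports from \cite{Zhang Qi and Li Xiaolong} rather than rederiving), the same use of non-negative sectional curvature to make the reaction terms favorable at a first null eigenvector, and the same balance $2\beta^2-\beta-2\beta\sigma\ge0$ giving $\beta\ge\frac{1+2\sigma}{2}$ from the type III bound. The only cosmetic difference is the regularization: you perturb $\beta$ to $\beta+\varepsilon$, while the paper perturbs the tensor itself by $\varepsilon(1+t)g(t_0)$; both are standard implementations of Hamilton's tensor maximum principle.
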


\begin{remark}  The following are some concrete cases where the theorem is applicable.
In dimension $n=3$, suppose that $(\mathrm{\bf{M}}^3, g(t))$ has non-negative sectional curvature. Then there exists a small time $T>0$, depending only on the volume non-collapsing
constant $v_0 \equiv \inf_{x \in  \textbf{M}}|B(x, 1)|_{g(0)}$, such that
$$\nabla_i \nabla_j \ln u + \left(\frac{C_0}{t}\right) g_{ij} \geq 0$$
for any $t \in (0,T)$ and  a positive  constant $C_{0}$  depending only on $v_0$.
This is consequence of the small time type III curvature bound from \cite[Lemma 2.1]{topping} and the theorem.

 For all $n \ge 4$, assume the isoperimetric constant $i_n$ of the initial manifold is sufficiently close to the Euclidean ones, then there exists a small time $T>0$, depending only on $i_n $, such that
$$\nabla_i \nabla_j \ln u + \left(\frac{C_0}{t}\right) g_{ij} \geq 0$$
for any $t \in (0,T)$ and  a positive  constant $C_{0}$  depending only on $i_n$.
This follows from the small time type III curvature bound from  Perelman's Pseudolocality  Theorem 10.1 in \cite{Perelman} and  the theorem.
\end{remark}

The rest of the paper is organized as follows. In Section \ref{section 2}, we will prove the backward Harnack inequality Theorem \ref{Thm-reverse Harnack inequality}. In Section \ref{section 3} we will prove Theorem \ref{Kuang-Zhang Thm1-1} and introduce some related gradient estimates Theorem \ref{Hamilton Thm of grad(u)} and \ref{Hamilton Thm of Laplace(u)}.
Finally Theorem \ref{matric heat eq.-Thm} will be proven in Section \ref{section 4}. Unless stated otherwise, we use $C$, $c$ with or without index to denote positive constants that may change from line to line. Also $B(x, r)$  denotes the geodesic ball centered at $x$ with radius $r$, $|B(x, r)|$ the volume and $d(x, y)$ the distance between two points $x, y$ on $\mathbf{M}$, a Riemannian manifold.

\section{Proof of Theorem \ref{Thm-reverse Harnack inequality}}\label{section 2}

In this section, we apply Theorems \ref{Theorem A} and \ref{Theorem B} to give a proof of the backward Harnack inequality.

\begin{proof}[\bf{Proof of Theorem \ref{Thm-reverse Harnack inequality}}]
The proof is divided into three steps.

\textbf{Step 1.} We derive the pertinent inequality for the heat kernel with fixed spatial point at different times.

In this step, we assume $G=G(x,t;y,0)$ is the heat kernel with a pole at $y \in \mathrm{\bf{M}}$, $t=0$. Given any fixed $T \in (0,1]$, by \cite[Lemma 4.1]{Hamilton 1993} (Theorem \ref{Theorem B} here), we have, for
\[
A=\sup_{\mathrm{\bf{M}}\times[\frac{T}{2},T]}G(x,t;y,0),
\] that
\begin{equation}\label{Hamilton-laplace bound}
\frac{\partial_{t}G}{G} \leq \frac{\theta_{1}}{t-\frac{T}{2}}\ln\left(\frac{eA}{G}\right)
\end{equation}
for $t\in [\frac{T}{2},T]$, where $\theta_{1}=e^{K}\max\{n,4\}$ (See Appendix \ref{appendix A} for details).
It follows from \cite[p267]{Zhang-CAG}, fourth line from below, which holds for $t\leq 1$, that
\begin{equation}\label{Zhang-CAG-log(A/u) bound}
\ln\left( \frac{e A}{G}\right) \leq \theta_{2}\left(1+\frac{\mathrm{diam}^{2}}{t}\right)
\end{equation}
for $t\in [\frac{T}{2},T]$, where $\theta_{2}$ is a constant depending only on $n$ and $K$.

Now we choose $t\in[\frac{3}{4}T,T]$.
Combining (\ref{Zhang-CAG-log(A/u) bound}) with (\ref{Hamilton-laplace bound}) and $t-\frac{T}{2} \geq \frac{1}{4}T \geq \frac{1}{4}t$, we conclude
\begin{equation}\label{boound of dG/dt}
\partial_{t}\ln G(x,t;y,0) \leq \frac{4\theta_{1} \theta_{2}}{t}+\frac{4\theta_{1}\theta_{2}\mathrm{diam}^{2}}{t^{2}}
\end{equation}
for any $x\in \mathrm{\bf{M}}$ and $t\in [\frac{3}{4}T,T]$. Since $T\in(0,1]$ is arbitrary, we deduce that (\ref{boound of dG/dt}) holds for all $t\in(0,1]$.
Integrating inequality (\ref{boound of dG/dt}) from $t_{1}$ to $t_{2}$, we obtain
$$\ln\left(\frac{G(x,t_{2};y,0)}{G(x,t_{1};y,0)}\right) \leq 4\theta_{1} \theta_{2} \ln\left(\frac{t_{2}}{t_{1}}\right)+4\theta_{1}\theta_{2}\mathrm{diam}^{2}\left(\frac{1}{t_{1}}-\frac{1}{t_{2}}\right),
\quad t_{1},t_{2}\in(0,1].$$
Exponentiating both sides of the inequality yields
\begin{equation}\label{heat kernel bound-fixed metric}
G(x,t_{2};y,0) \leq G(x,t_{1};y,0)\left(\frac{t_{2}}{t_{1}}\right)^{4\theta_{1} \theta_{2}}
\exp\left\{4\theta_{1}\theta_{2}\mathrm{diam}^{2}\left(\frac{1}{t_{1}}-\frac{1}{t_{2}}\right)\right\}
\end{equation}
for any $x\in\mathrm{\bf{M}}$ and $0<t_{1} \leq t_{2}\leq 1$.

\textbf{Step 2.} Notice that the first spatial point in $G$ in the above inequality is fixed.
In this step, we will prove that the heat kernel $G$ at different spatial points in the same time can be compared too, modulo an exponential term involving the distance function. This is based on Theorem \ref{Theorem A} by Hamilton.

Let $u=G(x, t; y, 0)$ and $A=\sup_{(x, t) \in \mathbf{M }\times [T/2, T]} u$. Then the aforementioned theorem (Theorem \ref{Theorem A}) infers, for $t \in (T/2, T]$, that
$$(t-(T/2))|\nabla \ln (A/u)|^{2} \leq (1+2K (t-(T/2)) \, \ln\left(\frac{A}{u}\right).$$
Denoting $f=f(x, t) := \ln (A/u(x, t))$, we can write the above inequality as
$$\frac{|\nabla f|^{2}}{f} \leq \frac{(1+2K (t-(T/2))}{(t-(T/2))}=2K+ \frac{1}{(t-(T/2))}.$$
Picking two points $x_1, x_2$ in $\mathbf{M}$ and taking square root of the preceding inequality, we deduce, after integrating along a minimal geodesic, that
\[
\sqrt{\ln \left( A/u(x_1, t)\right)} - \sqrt{\ln \left( A/u(x_2, t)\right)} \le \frac{1}{2} \left[2K+ \frac{1}{(t-(T/2))}\right]^{1/2} \, d(x_2, x_1).
\]Therefore
\[
\ln \left( u(x_2, t)/u(x_1, t)\right)  \le \frac{1}{2} \left[2K+ \frac{1}{(t-(T/2))}\right]^{1/2} \, d(x_2, x_1) \, \left( \sqrt{\ln \left( A/u(x_1, t)\right)} + \sqrt{\ln \left( A/u(x_2, t)\right)} \right),
\]which implies, by \eqref{Zhang-CAG-log(A/u) bound}, that
\[
\ln \left( u(x_2, t)/u(x_1, t)\right)  \le \frac{1}{2} \left[2K+ \frac{1}{(t-(T/2))}\right]^{1/2} \, d(x_2, x_1) \, \left( 2 \sqrt{\theta_2} (1+ \mathrm{diam}/\sqrt{t}) \right).
\]For $t \in [\frac{3}{4}T, T]$, this infers
\[
\ln \left( u(x_2, t)/u(x_1, t)\right)  \le 2 \left[\sqrt{K}+ \frac{1}{\sqrt{t}}\right] \, d(x_2, x_1) \, \left(  \sqrt{\theta_2} (1+ \mathrm{diam}/\sqrt{t}) \right),
\]namely
\be
\lab{gx2x1t}
G(x_2, t; y, 0) \le G(x_1, t; y, 0) \, \exp \left[2 \sqrt{\theta_2} \left(\sqrt{K}+ \frac{1}{\sqrt{t}}\right)  \, \left(   1+ \frac{\mathrm{diam}}{\sqrt{t}} \right) \, d(x_2, x_1) \right].
\ee Let us mention that this bound with $\mathrm{diam}$ replaced by $d(x_1, x_2)$  can also be obtained from integrating the inequality
\[
|\nabla \ln G(x, t; y, 0)| \le C [d(x, y)/t + 1/\sqrt{t}],  \quad t \in [0, 1],
\]which was proven in \cite{Sheu}. However the constant $C$ was not explicitly given and could depend on the diameter and volume non-collapsing constant and the Ricci curvature lower bound. Later the constant was proven to depend only on $n$ and $K$ in \cite{Kots} and \cite{LiZz}.
By \eqref{gx2x1t} and \eqref{heat kernel bound-fixed metric}, this implies
\be
\lab{gx21t21}
\begin{aligned}
G(x_2,t_{2};y,0) &\leq G(x_1,t_{1};y,0)\left(\frac{t_{2}}{t_{1}}\right)^{4\theta_{1} \theta_{2}}
\exp\left\{4\theta_{1}\theta_{2}\mathrm{diam}^{2}\left(\frac{1}{t_{1}}-\frac{1}{t_{2}}\right)\right\}
\, \\
 &\qquad \times \exp \left[2 \sqrt{\theta_2} \left(\sqrt{K}+ \frac{1}{\sqrt{t_2}}\right)  \, \left(   1+ \frac{\mathrm{diam}}{\sqrt{t_2}} \right) \, d(x_2, x_1) \right].
\end{aligned}
\ee

\textbf{Step 3.} We prove that any positive solution to the heat equation \eqref{def. of heat equation for fixed metric} satisfies the same bound as \eqref{gx21t21}.

Let $u$ be any positive solution of the heat equation \eqref{def. of heat equation for fixed metric} on $\mathrm{\bf{M}}\times (0,1]$. Without loss of generality, we can assume $u_{0} = u(x, 0)$ is smooth because we can consider $u(x, t + \epsilon)$ and let $\epsilon \rightarrow0$ otherwise. Then
$$u(x,t)=\int_{\mathrm{\bf{M}}} G(x,t;y,0)u_{0}(y)dy$$
for any $(x,t)\in \mathrm{\bf{M}}\times(0,1]$.
Combining this with \eqref{gx21t21}, we find, for $z \in \mathbf{M}$, that

\[
\al
u(x,t_{2}) &\leq \left(\frac{t_{2}}{t_{1}}\right)^{4\theta_{1}\theta_{2}}
\exp\left\{4\theta_{1}\theta_{2}\mathrm{diam}^{2}\left(\frac{1}{t_{1}}-\frac{1}{t_{2}}\right) + 2 \sqrt{\theta_2} \left(\sqrt{K}+ \frac{1}{\sqrt{t_2}}\right)  \, \left(   1+ \frac{\mathrm{diam}}{\sqrt{t_2}} \right) \, d(x, z) \right\}\\
& \qquad \times
\int_{\mathrm{\bf{M}}} G(z,t_{1};y,0)u_{0}(y) dy.
\eal
\]
Hence, for any $x, z \in\mathrm{\bf{M}}$ and $0<t_{1} \leq t_{2}\leq 1$, the following holds
$$\frac{u(x,t_2)}{u(z,t_1)} \leq \left(\frac{t_2}{t_1}\right)^{4\theta_{1} \theta_{2}}
\exp\left\{4\theta_{1}\theta_{2}\mathrm{diam^{2}}
\left(\frac{1}{t_1}-\frac{1}{t_2}\right)
+ 2 \sqrt{\theta_2} \left(\sqrt{K}+ \frac{1}{\sqrt{t_2}}\right)  \, \left(   1+ \frac{\mathrm{diam}}{\sqrt{t_2}} \right) \, d(x, z)\right\}.$$

\end{proof}

\section{Gradient estimates for positive solutions to the heat-type equation under the Ricci flow}\label{section 3}

\subsection{Proof of Theorem \ref{Kuang-Zhang Thm1-1}}

Here, we present the proof of Theorem \ref{Kuang-Zhang Thm1-1}, which, as mentioned, is a Li-Yau type estimate for the conjugate heat equation free of curvature condition.

\begin{proof}
By a standard approximation argument as in \cite[vol.2]{Bennett Chow 2007} e.g., we can assume without loss of generality that $g=g(t)$ is smooth in the closed time interval $[0, T]$ and that $u$ is strictly positive everywhere. Indeed, by Theorem A.23 in \cite[vol.2]{Bennett Chow 2007} (due to W.X. Shi), the curvature tensor is uniformly bounded in the time interval $[0, T-\omega]$ with the bound depending only on the initial data and $\omega$, a positive number. Moreover the lower bound of the scalar curvature is nondecreasing about $t$ since the scalar curvature $R$ satisfies (c.f.\cite[p209]{Bennett Chow 2004})
$$\Delta R-\partial_{t}R+\frac{2}{n}R^{2}\leq0.$$
Therefore, we can just work on the interval $[0, T-\omega]$ first. In the proof, it will be clear that all constants are independent of the curvature tensor. Hence we can take $\omega$ to zero to get the desired result on $[0, T)$.

As done in \cite{Kuang and Zhang}, by standard computation, we have
\begin{equation*}
(\partial_{t}+\Delta)(\Delta f)=2\langle\text{Ric}, \nabla^{2}f\rangle+\Delta|\nabla f|^{2}-\Delta R.
\end{equation*}
Using the evolution equation of $g$ and $\partial_{t}f=-\Delta f+|\nabla f|^{2}-R+\dfrac{n}{2\tau}$, we deduce
\begin{equation*}
(\partial_{t}+\Delta)(|\nabla f|^{2})=2\text{Ric}(\nabla f, \nabla f)+2\langle\nabla f, \nabla(-\Delta f+|\nabla f|^{2}-R)\rangle
+\Delta|\nabla f|^{2}.
\end{equation*}
Notice also
\begin{equation*}
(\partial_{t}+\Delta)R=2\Delta R+2|\text{Ric}|^{2}.
\end{equation*}
Combining the above three expressions, we conclude
\begin{equation}\label{evolution equation}
\begin{aligned}
&(\partial_{t}+\Delta)(2\Delta f-|\nabla f|^{2}+R)\\
=&4\langle\text{Ric}, \nabla^{2}f\rangle+\Delta|\nabla f|^{2}
-2\text{Ric}(\nabla f, \nabla f)-2\langle\nabla f, \nabla(-\Delta f+|\nabla f|^{2}-R)\rangle+2|\text{Ric}|^{2}.
\end{aligned}
\end{equation}
Denote
\begin{equation*}
q(x,t):=2\Delta f-|\nabla f|^{2}+R.
\end{equation*}
By Bochner's identity
\begin{equation*}
\Delta|\nabla f|^{2}=2|\nabla^{2}f|^{2}+2\langle\nabla f, \nabla(\Delta f)\rangle+2\text{Ric}(\nabla f, \nabla f),
\end{equation*}
the equation (\ref{evolution equation}) becomes
\begin{equation*}
(\partial_{t}+\Delta)q-2\langle\nabla f, \nabla q\rangle\geq\frac{2}{n}(R+\Delta f)^{2},
\end{equation*}
this inequality can also be found independently in reference \cite[p202: the proof of Lemma 5.19]{Bennett Chow 2007}.

Since
\begin{equation*}
R+\Delta f=\frac{1}{2}(q+|\nabla f|^{2}+R),
\end{equation*}
and hence
\begin{equation}\label{the evolution eq of q}
\begin{aligned}
(\partial_{t}+\Delta)q-2\langle\nabla f, \nabla q\rangle\geq\frac{1}{2n}(q+|\nabla f|^{2}+R)^{2}.
\end{aligned}
\end{equation}
Let $\kappa\geq2n$ be a constant to be determined later. For any $\epsilon>0$, we get
\begin{equation}\label{the evolution eq of c/t}
\begin{aligned}
(\partial_{t}+\Delta)\left(\frac{\kappa}{t+\epsilon}+\frac{\kappa}{\tau+\epsilon}\right)
-2\left\langle\nabla f, \nabla\left(\frac{\kappa}{t+\epsilon}+\frac{\kappa}{\tau+\epsilon}\right)\right\rangle
=-\frac{\kappa}{(t+\epsilon)^{2}}+\frac{\kappa}{(\tau+\epsilon)^{2}}.
\end{aligned}
\end{equation}
Denote
\begin{equation*}
B:=|\nabla f|^{2}+R.
\end{equation*}
Subtracting inequality (\ref{the evolution eq of q}) by equation (\ref{the evolution eq of c/t}) yields
\begin{equation}\label{55555}
\begin{aligned}
&(\partial_{t}+\Delta)\left[q-\left(\frac{\kappa}{t+\epsilon}+\frac{\kappa}{\tau+\epsilon}\right)\right]
-2\left\langle\nabla f, \nabla\left[q-\left(\frac{\kappa}{t+\epsilon}+\frac{\kappa}{\tau+\epsilon}\right)\right]\right\rangle\\
\geq&\frac{1}{2n}(q+B)^{2}+\frac{\kappa}{(t+\epsilon)^{2}}-\frac{\kappa}{(\tau+\epsilon)^{2}}.
\end{aligned}
\end{equation}
Combining \eqref{55555} with the expression
\begin{equation*}
\begin{aligned}
&\frac{1}{2n}(q+B)^{2}+\frac{\kappa}{(t+\epsilon)^{2}}-\frac{\kappa}{(\tau+\epsilon)^{2}}\\
=&\frac{1}{2n}\left[(q+B)^{2}-\left(\frac{\kappa}{t+\epsilon}+\frac{\kappa}{\tau+\epsilon}\right)^{2}
+\left(\frac{\kappa}{t+\epsilon}+\frac{\kappa}{\tau+\epsilon}\right)^{2}
+\frac{2n\kappa}{(t+\epsilon)^{2}}-\frac{2n\kappa}{(\tau+\epsilon)^{2}}\right],
\end{aligned}
\end{equation*}
we obtain
\begin{equation}\label{the evolution eq of q+c/t}
\begin{aligned}
&(\partial_{t}+\Delta)\left[q-\left(\frac{\kappa}{t+\epsilon}+\frac{\kappa}{\tau+\epsilon}\right)\right]
-2\left\langle\nabla f, \nabla\left[q-\left(\frac{\kappa}{t+\epsilon}+\frac{\kappa}{\tau+\epsilon}\right)\right]\right\rangle\\
\geq&\frac{1}{2n}\left\{\left[q+B+\left(\frac{\kappa}{t+\epsilon}+\frac{\kappa}{\tau+\epsilon}\right)\right]
\left[q+B-\left(\frac{\kappa}{t+\epsilon}+\frac{\kappa}{\tau+\epsilon}\right)\right]
+\frac{\kappa^{2}+2n\kappa}{(t+\epsilon)^{2}}+\frac{\kappa^{2}-2n\kappa}{(\tau+\epsilon)^{2}}\right.\\
&\left.+\frac{2\kappa^{2}}{(t+\epsilon)(\tau+\epsilon)}\right\}.
\end{aligned}
\end{equation}
We hope to show that the right-hand side of inequality (\ref{the evolution eq of q+c/t}) is non-negative, thus we deal with the previous inequality at a given point $(x,t)$ in three cases.

\noindent{\bf{Case 1.}} $B\geq0$ and $q+B+\left(\frac{\kappa}{t+\epsilon}+\frac{\kappa}{\tau+\epsilon}\right)\leq0$. Then
\begin{equation*}
q+B-\left(\frac{\kappa}{t+\epsilon}+\frac{\kappa}{\tau+\epsilon}\right)\leq0,
\end{equation*}
also since $\kappa\geq2n$, we have
$$\frac{\kappa^{2}+2n\kappa}{(t+\epsilon)^{2}}
+\frac{\kappa^{2}-2n\kappa}{(\tau+\epsilon)^{2}}+\frac{2\kappa^{2}}{(t+\epsilon)(\tau+\epsilon)}\geq0.$$
Therefore
\begin{equation*}
(\partial_{t}+\Delta)\left[q-\left(\frac{\kappa}{t+\epsilon}+\frac{\kappa}{\tau+\epsilon}\right)\right]
-2\left\langle\nabla f, \nabla\left[q-\left(\frac{\kappa}{t+\epsilon}+\frac{\kappa}{\tau+\epsilon}\right)\right]\right\rangle\geq0.
\end{equation*}

\noindent{\bf{Case 2.}} $B\geq0$ and $q+B+\left(\frac{\kappa}{t+\epsilon}+\frac{\kappa}{\tau+\epsilon}\right)>0$. Then the inequality (\ref{the evolution eq of q+c/t}) can be changed to
\begin{equation*}
\begin{aligned}
&(\partial_{t}+\Delta)\left[q-\left(\frac{\kappa}{t+\epsilon}+\frac{\kappa}{\tau+\epsilon}\right)\right]
-2\left\langle\nabla f, \nabla\left[q-\left(\frac{\kappa}{t+\epsilon}+\frac{\kappa}{\tau+\epsilon}\right)\right]\right\rangle\\
&-\frac{1}{2n}\left[q+B+\left(\frac{\kappa}{t+\epsilon}+\frac{\kappa}{\tau+\epsilon}\right)\right]
\left[q-\left(\frac{\kappa}{t+\epsilon}+\frac{\kappa}{\tau+\epsilon}\right)\right]\\
\geq&\frac{1}{2n}B\left[q+B+\left(\frac{\kappa}{t+\epsilon}+\frac{\kappa}{\tau+\epsilon}\right)\right]\\
\geq&0.
\end{aligned}
\end{equation*}

\noindent{\bf{Case 3.}} $B<0$. Then the inequality (\ref{the evolution eq of q+c/t}) can be transformed into
\begin{equation}\label{the evolution eq of case 3}
\begin{aligned}
&(\partial_{t}+\Delta)\left[q-\left(\frac{\kappa}{t+\epsilon}+\frac{\kappa}{\tau+\epsilon}\right)\right]
-2\left\langle\nabla f, \nabla\left[q-\left(\frac{\kappa}{t+\epsilon}+\frac{\kappa}{\tau+\epsilon}\right)\right]\right\rangle\\
\geq&\frac{1}{2n}\left\{\left[q+B+\left(\frac{\kappa}{t+\epsilon}+\frac{\kappa}{\tau+\epsilon}\right)\right]
\left[q-\left(\frac{\kappa}{t+\epsilon}+\frac{\kappa}{\tau+\epsilon}\right)\right]
+B\left[q-\left(\frac{\kappa}{t+\epsilon}+\frac{\kappa}{\tau+\epsilon}\right)\right]\right.\\
&\left.+B^{2}+2B\left(\frac{\kappa}{t+\epsilon}+\frac{\kappa}{\tau+\epsilon}\right)+\frac{\kappa^{2}+2n\kappa}{(t+\epsilon)^{2}}
+\frac{\kappa^{2}-2n\kappa}{(\tau+\epsilon)^{2}}+\frac{2\kappa^{2}}{(t+\epsilon)(\tau+\epsilon)}\right\}.
\end{aligned}
\end{equation}
Using the weak minimum principle for a differential inequality $(\partial_{t}-\Delta)R\geq\frac{2}{n}R^{2}$, we have the following well known estimate of the scalar curvature $R$ under the Ricci flow (\ref{def. of Ricci flow}):
\begin{equation*}
R\geq-\frac{n}{2(t+\epsilon)}
\end{equation*}
for some $\epsilon>0$ depending on the initial value of $R$. Hence
\begin{equation*}
B=|\nabla f|^{2}+R\geq R\geq-\frac{n}{2(t+\epsilon)}
\end{equation*}
for any $t\in(0, T)$ and
\begin{equation*}
2B\left(\frac{\kappa}{t+\epsilon}+\frac{\kappa}{\tau+\epsilon}\right)
\geq-\frac{n}{t+\epsilon}\left(\frac{\kappa}{t+\epsilon}+\frac{\kappa}{\tau+\epsilon}\right).
\end{equation*}
Hence, combining the above inequality, we can transform expression (\ref{the evolution eq of case 3}) into
\begin{equation*}
\begin{aligned}
&(\partial_{t}+\Delta)\left[q-\left(\frac{\kappa}{t+\epsilon}+\frac{\kappa}{\tau+\epsilon}\right)\right]
-2\left\langle\nabla f, \nabla\left[q-\left(\frac{\kappa}{t+\epsilon}+\frac{\kappa}{\tau+\epsilon}\right)\right]\right\rangle\\
&-\frac{1}{2n}\left[q+2B+\left(\frac{\kappa}{t+\epsilon}+\frac{\kappa}{\tau+\epsilon}\right)\right]
\left[q-\left(\frac{\kappa}{t+\epsilon}+\frac{\kappa}{\tau+\epsilon}\right)\right]\\
\geq&\frac{1}{2n}\left[\frac{\kappa^{2}+n\kappa}{(t+\epsilon)^{2}}
+\frac{\kappa^{2}-2n\kappa}{(\tau+\epsilon)^{2}}+\frac{2\kappa^{2}-n\kappa}{(t+\epsilon)(\tau+\epsilon)}\right].
\end{aligned}
\end{equation*}
Taking $\kappa=2n$, we have
\begin{equation*}
\begin{aligned}
&(\partial_{t}+\Delta)\left[q-\left(\frac{2n}{t+\epsilon}+\frac{2n}{\tau+\epsilon}\right)\right]
-2\left\langle\nabla f, \nabla\left[q-\left(\frac{2n}{t+\epsilon}+\frac{2n}{\tau+\epsilon}\right)\right]\right\rangle\\
&-V\left[q-\left(\frac{2n}{t+\epsilon}+\frac{2n}{\tau+\epsilon}\right)\right]\\
\geq&0,
\end{aligned}
\end{equation*}
where $V=V(x,t)$ is a bounded function defined by
\[
V=
\begin{cases}
  \nonumber 0, & \, \text{if} \,  B\geq0, q+B+\left(\frac{2n}{t+\epsilon}+\frac{2n}{\tau+\epsilon}\right)\leq0, \,  \text{at} \, (x, t),\\
  \nonumber \frac{1}{2n}\left[q+B+\left(\frac{2n}{t+\epsilon}+\frac{2n}{\tau+\epsilon}\right)\right],
             &\, \text{if} \,  B\geq0, q+B+\left(\frac{2n}{t+\epsilon}+\frac{2n}{\tau+\epsilon}\right)>0, \, \text{ at} \, (x, t),\\
  \nonumber \frac{1}{2n}\left[q+2B+\left(\frac{2n}{t+\epsilon}+\frac{2n}{\tau+\epsilon}\right)\right], &\, \text{if} \,  B<0 , \text{at} \, (x, t).
\end{cases}
\]

Since we are assuming that the Ricci flow (\ref{def. of Ricci flow}) is smooth in $[0, T]$ and that $u(x,t)$ is a positive $C^{2,1}$ solution to the conjugate heat equation (\ref{def. of conjugate heat eq.}), we obtain
\begin{equation*}
q=2\Delta f-|\nabla f|^{2}+R=\frac{|\nabla u|^{2}}{u^{2}}-2\frac{\Delta u}{u}+R
\end{equation*}
is bounded for $t\in[0, T]$. If we choose $\epsilon$ sufficiently small, then $q(x,T)\leq\frac{2n}{T+\epsilon}+\frac{2n}{\epsilon}$. According to the maximum principle (\cite[Proposition 4.3]{Bennett Chow 2004}, c.f.), for all $t\in[0,T]$,
\begin{equation*}
q(x,t)\leq\frac{2n}{t+\epsilon}+\frac{2n}{\tau+\epsilon}.
\end{equation*}
Letting $\epsilon\rightarrow0$, we have for all $t\in[0, T]$,
\begin{equation*}
q(x,t)\leq\frac{2n}{t}+\frac{2n}{\tau}.
\end{equation*}
That is to say
\begin{equation*}
2\Delta f-|\nabla f|^{2}+R\leq\frac{2n}{t}+\frac{2n}{\tau}.
\end{equation*}
Further, using $f=-\ln u-\frac{n}{2}\ln(4\pi\tau)$, the above yields
\begin{equation*}
\frac{|\nabla u|^{2}}{u^{2}}-2\frac{u_{\tau}}{u}-R\leq\frac{2n}{t}+\frac{2n}{\tau}, \quad t\in[0, T].
\end{equation*}
This completes the proof of the theorem.
\end{proof}

\subsection{Additional gradient estimates}

In this subsection we present two more Hamilton type log gradient estimates for the forward  conjugate heat equation and the heat equation coupled with the Ricci flow. The main point is that the condition is only applied on the scalar curvature.

In 2006, one of us \cite[Theorem 3.3]{Zhang 2006} generalized Theorem \ref{Theorem A} to evolving metrics using Hamilton's \cite{Hamilton 1993} idea, without any curvature assumptions. In other words, he proved that when the metric $g(t)$ evolves by the Ricci flow (\ref{def. of Ricci flow}), positive solutions $u$ of the heat equation \eqref{def. of heat eq.} also satisfy the same estimate. This result was proved independently by X. D. Cao and R. S. Hamilton \cite[Theorem 5.1]{Cao Xiaodong and Hamilton}. Here, we consider the same problem for the forward conjugate heat equation \eqref{forward conjugate eq} under the Ricci flow (\ref{def. of Ricci flow}) and obtain the following Hamilton-type bound.

\begin{theorem}\label{Hamilton Thm of grad(u)}
Let $(\mathrm{\bf{M}}^{n},g(t)), t\in(0,1]$, be a solution to the Ricci flow (\ref{def. of Ricci flow}) on an n-dimensional compact Riemannian manifold whose scalar curvature satisfies $|R|\leq R_{0}$ for some positive constant $R_{0}$. Suppose $u: \mathrm{\bf{M}}\times(0,1]\rightarrow(0,\infty)$ is any positive solution to the forward conjugate heat equation \eqref{forward conjugate eq} on $\mathrm{\bf{M}}\times(0,1]$ and $A=\mathrm{sup}_{\mathrm{\bf{M}}\times(0,1]}u$, i.e.
\[
(\Delta_{g(t)}+R_{g(t)}-\partial_{t})u=0.
\]
Then there exists a large constant $\beta_{1}$, depending only on $R_{0}$ and $n$, such that
$$\frac{|\nabla u(x,t)|^{2}}{u^{2}(x,t)}\leq\frac{\beta_{1}}{t}\ln\left(\frac{2A}{u}\right)$$
for any $(x,t)\in \mathrm{\bf{M}}\times(0,1]$.
\end{theorem}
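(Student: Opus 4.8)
The plan is to mimic Hamilton's original proof of Theorem \ref{Theorem A} (the Li-Yau-Hamilton type argument in \cite{Hamilton 1993}), adapting it to the forward conjugate heat equation under the Ricci flow along the lines of \cite{Zhang 2006} and \cite{Cao Xiaodong and Hamilton}. Set $w = \ln(2A/u)$, so that $w \ge \ln 2 > 0$ on $\mathrm{\bf{M}} \times (0,1]$. First I would compute the evolution equation satisfied by $w$. Since $u$ solves $(\Delta + R - \partial_t)u = 0$, a direct calculation gives $\partial_t w - \Delta w = -|\nabla w|^2 - R$ (up to sign conventions to be checked carefully), where the $-R$ term is the new contribution compared to the fixed-metric case and is controlled by $|R| \le R_0$. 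Next I would introduce the quantity $P = t\,|\nabla w|^2 / w$ (or, following Hamilton more closely, work with $t|\nabla w|^2$ and the auxiliary function $w$ simultaneously) and derive a differential inequality of the form $(\partial_t - \Delta)P \le (\text{good negative terms}) + (\text{error terms bounded by } R_0, n)$, using Bochner's formula $\Delta|\nabla w|^2 = 2|\nabla^2 w|^2 + 2\langle \nabla w, \nabla \Delta w\rangle + 2\mathrm{Ric}(\nabla w, \nabla w)$. Under the Ricci flow the time derivative of $g$ produces a $-2\mathrm{Ric}(\nabla w,\nabla w)$ term in $\partial_t|\nabla w|^2$ that exactly cancels the Ricci term from Bochner, which is the well-known mechanism that lets one drop curvature lower bounds; this is precisely why only a bound on $|R|$ (not on Ricci) is needed.

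The key algebraic step will be bookkeeping the "bad" terms. Following Hamilton, after expanding $(\partial_t - \Delta)(t|\nabla w|^2)$ one should find terms like $-2t|\nabla^2 w|^2$, $-2t\langle \nabla w, \nabla|\nabla w|^2\rangle$ (absorbed into a transport term $-2\langle \nabla w, \nabla(\cdot)\rangle$ when combined with the equation for $w$), and the genuinely problematic cubic/quartic term coming from $\nabla w \cdot \nabla(|\nabla w|^2)$. The trick in \cite{Hamilton 1993} is the pointwise inequality $2\langle \nabla w, \nabla |\nabla w|^2\rangle \le 2|\nabla w|\,|\nabla^2 w|\,|\nabla w|$ combined with Cauchy-Schwarz $2ab \le \varepsilon a^2 + \varepsilon^{-1}b^2$ to absorb it into $-2t|\nabla^2 w|^2$ at the cost of a $+C t |\nabla w|^4/w$ type term, which is then dominated by $P^2$. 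The scalar curvature contributes an extra $t\langle \nabla w, \nabla R\rangle$ term; here I would handle it either by including $R$ in the barrier (as in the proof of Theorem \ref{Kuang-Zhang Thm1-1} above) or, more simply, by noting $|\nabla R|$ is controlled on compact time intervals away from $0$ via Shi-type estimates and a smoothing argument, then absorbing it with Cauchy-Schwarz — but the cleanest route is to carry $R$ along so that only the algebraic bound $|R| \le R_0$ enters the final constant. The upshot is an inequality $(\partial_t - \Delta)P - 2\langle \nabla w, \nabla P\rangle \le \frac{1}{t}P\,(C_1 - C_2 P) + C_3 R_0\,(1 + P)$ for dimensional constants $C_i$.

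Then I would run the maximum principle: since $\mathrm{\bf{M}}$ is compact and $P \to 0$ as $t \to 0^+$ (because $w$ is bounded below by $\ln 2$ and $|\nabla w|$ is bounded, so $t|\nabla w|^2/w \to 0$), any first interior maximum of $P$ over $\mathrm{\bf{M}} \times (0, t_0]$ occurs at a point where $\nabla P = 0$, $\Delta P \le 0$, $\partial_t P \ge 0$; plugging into the differential inequality forces $P(C_1 - C_2 P) + C_3 R_0 t(1+P) \ge 0$ at that point, and since $t \le 1$ this gives a quadratic inequality in $P$ whose solution is $P \le \beta_1$ for some $\beta_1 = \beta_1(n, R_0)$. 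Unwinding the definition of $P$ yields $t|\nabla w|^2 \le \beta_1 w$, i.e. $\frac{|\nabla u|^2}{u^2} \le \frac{\beta_1}{t}\ln\!\left(\frac{2A}{u}\right)$, as claimed. I expect the main obstacle to be the careful treatment of the $\nabla R$ term and the sign/coefficient bookkeeping in the evolution inequality for $P$ — in particular verifying that the coefficient of the $-P^2/t$ term is strictly negative after absorbing all the cross terms, which is exactly the delicate point in Hamilton's original argument; the Ricci-flow cancellation of the Bochner Ricci term, while conceptually central, is by contrast routine. A standard approximation (replacing $u$ by $u(\cdot, t+\epsilon)$, working on $[\epsilon, 1]$, and using Shi's estimates to ensure all quantities are smooth and bounded so the maximum principle applies) takes care of the regularity issues near $t = 0$.
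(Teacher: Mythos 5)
Your overall architecture — a Hamilton-type test function, a maximum-principle argument, and the observation that the Ricci-flow term in $\partial_t g$ cancels the Bochner Ricci term so that no Ricci bound is needed — matches the paper's strategy, and you correctly identify the $\langle\nabla w,\nabla R\rangle$ term as the crux. But neither of your two proposed ways of handling that term actually closes the gap. First, "including $R$ in the barrier as in Theorem \ref{Kuang-Zhang Thm1-1}" does not transfer: in that proof the $\Delta R$ and $\nabla R$ contributions cancel because of the special algebraic structure of the Li--Yau/Perelman quantity $2\Delta f-|\nabla f|^{2}+R$; here, adding a term proportional to $Ru$ to the test function merely produces another $\langle\nabla R,\nabla u\rangle$ cross term of the same type (since $(\Delta+R-\partial_t)(Ru)=-2u|\mathrm{Ric}|^{2}+2\langle\nabla u,\nabla R\rangle$), and there is still no term of a good sign quadratic in $\nabla R$ against which to Cauchy--Schwarz. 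Second, invoking Shi-type estimates for $|\nabla R|$ is inadmissible for the stated conclusion: those estimates require a bound on the full curvature tensor (and degenerate as $t\to 0$), so the resulting constant would not depend only on $R_{0}$ and $n$ as the theorem asserts.

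The missing ingredient is the auxiliary term $\frac{\beta_{2}u}{R+R_{0}}$ (which the paper attributes to \cite{Zhang and Zhu Meng}). Its heat operator satisfies
\begin{equation*}
(\Delta+R-\partial_{t})\left(\frac{\beta_{2}u}{R+R_{0}}\right)
=\frac{2\beta_{2}u}{(R+R_{0})^{2}}|\mathrm{Ric}|^{2}
+\frac{2\beta_{2}u}{(R+R_{0})^{3}}|\nabla R|^{2}
-\frac{2\beta_{2}}{(R+R_{0})^{2}}\langle\nabla R,\nabla u\rangle,
\end{equation*}
and the point is that the convexity of $s\mapsto 1/s$ on $s=R+R_{0}\in(0,2R_{0}]$ yields a \emph{positive} $|\nabla R|^{2}$ term with coefficient bounded below by $\beta_{2}/(2R_{0})^{3}$, which (for $\beta_{2}=\beta_{2}(R_{0})$ large) absorbs all the $\langle\nabla R,\nabla u\rangle$ cross terms via Cauchy--Schwarz using only $|R|\le R_{0}$. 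The paper's test function is $\mathcal{Q}=t\bigl(\frac{|\nabla u|^{2}}{u}-Ru+\frac{\beta_{2}u}{R+R_{0}}\bigr)-\beta_{1}u\ln\bigl(\frac{2A}{u}\bigr)$ rather than your quotient $P=t|\nabla w|^{2}/w$; that difference is cosmetic, but without the $\frac{u}{R+R_{0}}$ device your differential inequality for $P$ cannot be established with constants depending only on $n$ and $R_{0}$, so the maximum-principle step as you describe it would not go through.
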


\begin{remark}
 We point out that the auxiliary term $\frac{u}{R+R_{0}}$ used in the following proof originated from \cite{Zhang and Zhu Meng} by M. Zhu and one of us.
\end{remark}

\begin{proof}[\bf{Proof}]
By direct calculation, we have
\begin{flalign*}
&(\Delta+R-\partial_{t})\left(\frac{|\nabla u|^{2}}{u}\right)
=\frac{2}{u}\left(u_{ij}-\frac{u_{i}u_{j}}{u}\right)^{2}-2\langle\nabla u,\nabla R\rangle,  \\
&(\Delta+R-\partial_{t})(Ru)=-2u|\text{Ric}|^{2}+2\langle\nabla u,\nabla R\rangle,  \\
&(\Delta+R-\partial_{t})\left(\frac{\beta_{2}u}{R+R_{0}}\right)
=\frac{2\beta_{2}u}{(R+R_{0})^{2}}|\text{Ric}|^{2}+\frac{2\beta_{2}u}{(R+R_{0})^{3}}|\nabla R|^{2}
-\frac{2\beta_{2}}{(R+R_{0})^{2}}\langle\nabla R,\nabla u\rangle, \\
&(\Delta+R-\partial_{t})(u\ln u)=\frac{|\nabla u|^{2}}{u}-Ru.
\end{flalign*}
Denote
$$\mathcal{Q}:=t\left(\frac{|\nabla u|^{2}}{u}-Ru+\frac{\beta_{2}u}{R+R_{0}}\right)
-\beta_{1}u\ln\left(\dfrac{2A}{u}\right).$$
We conclude that
\begin{equation*}
\begin{aligned}
&(\Delta+R-\partial_{t})\mathcal{Q}\\
=&\frac{2t}{u}\left(u_{ij}-\frac{u_{i}u_{j}}{u}\right)^{2}+2tu\left(1+\frac{\beta_{2}}{(R+R_{0})^{2}}\right)|\text{Ric}|^{2}
-2t\left(2+\frac{\beta_{2}}{(R+R_{0})^{2}}\right)\langle\nabla R,\nabla u\rangle\\
&+\frac{2\beta_{2}t}{(R+R_{0})^{3}}u|\nabla R|^{2}
+(\beta_{1}-1)\left(\frac{|\nabla u|^{2}}{u}-Ru\right)-\frac{\beta_{2}u}{R+R_{0}}.
\end{aligned}
\end{equation*}
It follows from Cauchy-Schwarz inequality that
\begin{equation*}
-2t\left(2+\frac{\beta_{2}}{(R+R_{0})^{2}}\right)\langle\nabla R,\nabla u\rangle
\geq-t[2(R+R_{0})^{2}+\beta_{2}]\left(\frac{u|\nabla R|^{2}}{(R+R_{0})^{3}}+\frac{|\nabla u|^{2}}{(R+R_{0})u}\right).
\end{equation*}
Substituting this to the previous identity, we deduce
\begin{equation*}
\begin{aligned}
&(\Delta+R-\partial_{t})\mathcal{Q}\\
\geq&\frac{2t}{u}\left(u_{ij}-\frac{u_{i}u_{j}}{u}\right)^{2}+2tu\left(1+\frac{\beta_{2}}{(R+R_{0})^{2}}\right)|\text{Ric}|^{2}
+\left(\frac{\beta_{2}}{(R+R_{0})^{3}}-\frac{2(R+R_{0})^{2}}{(R+R_{0})^{3}}\right)tu|\nabla R|^{2}\\
&+\left(\beta_{1}-1-\frac{t[2(R+R_{0})^{2}+\beta_{2}]}{R+R_{0}}\right)\left(\frac{|\nabla u|^{2}}{u}-Ru\right)
-\frac{t[2(R+R_{0})^{2}+\beta_{2}]}{R+R_{0}}Ru-\frac{\beta_{2}u}{R+R_{0}}.
\end{aligned}
\end{equation*}
Using the boundedness of the scalar curvature, we obtain
$$\frac{\beta_{2}}{(R+R_{0})^{3}}-\frac{2(R+R_{0})^{2}}{(R+R_{0})^{3}}\geq0$$
for some large constant $\beta_{2}$ depending only on $R_0$.

If $\mathcal{Q}$ has no zero point, then we have $$\mathcal{Q} \leq 0$$ since $\mathcal{Q}(0)<0$. The result follows by choosing an absolute positive constant $\beta_{1}$.

We assume that $(x_{0},t_{0})$ is the zero point of $\mathcal{Q}$, where the following holds
\begin{equation}\label{Q=0}
\begin{aligned}
t\left(\frac{|\nabla u|^{2}}{u}-Ru\right)
&=\beta_{1}u\ln\left(\frac{2A}{u}\right)-\frac{t\beta_{2}u}{R+R_{0}}\\
&\geq\left(\beta_{1}\ln2-\frac{t\beta_{2}}{R+R_{0}}\right)u\\
&\geq\frac{\beta_{1}\ln2}{2}u\\
&>0
\end{aligned}
\end{equation}
for a sufficiently large constant $\beta_{1}$.
Similarly, take a suitably large $\beta_{1}$ to make
$$\beta_{1}-1-\frac{t[2(R+R_{0})^{2}+\beta_{2}]}{R+R_{0}}\geq\frac{\beta_{1}}{2}.$$
Consequently
\begin{equation}\label{large beta1}
\begin{aligned}
\left(\beta_{1}-1-\frac{t[2(R+R_{0})^{2}+\beta_{2}]}{R+R_{0}}\right)\left(\frac{|\nabla u|^{2}}{u}-Ru\right)
\geq\frac{\beta_{1}}{2}\left(\frac{|\nabla u|^{2}}{u}-Ru\right).
\end{aligned}
\end{equation}
It then follows from (\ref{Q=0}) and (\ref{large beta1}) that
\begin{equation*}
\begin{aligned}
&\left(\beta_{1}-1-\frac{t[2(R+R_{0})^{2}+\beta_{2}]}{R+R_{0}}\right)\left(\frac{|\nabla u|^{2}}{u}-Ru\right)
-\frac{t[2(R+R_{0})^{2}+\beta_{2}]}{R+R_{0}}Ru-\frac{\beta_{2}u}{R+R_{0}}\\
&\geq\left(\frac{\beta_{1}}{2}\cdot\frac{\beta_{1}\ln2}{2t}-\frac{t[2(R+R_{0})^{2}+\beta_{2}]}{R+R_{0}}R-\frac{\beta_{2}}{R+R_{0}}\right)u\\
&\geq0
\end{aligned}
\end{equation*}
for some large enough $\beta_{1}$ depending only on $R_0$.

The maximum principle (\cite[Proposition 4.3]{Bennett Chow 2004}, c.f.) implies that $$\mathcal{Q}(t)\leq0$$ for all $t\in(0,1]$. That is to say
$$\frac{|\nabla u|^{2}}{u}-Ru
+\frac{\beta_{2}u}{R+R_{0}}\leq\frac{\beta_{1}}{t}u\ln\left(\frac{2A}{u}\right).$$
Hence
$$\frac{|\nabla u|^{2}}{u^{2}}\leq\frac{\beta_{1}}{t}\ln\left(\frac{2A}{u}\right), \quad t \in (0,1]$$
for an appropriate constant $\beta_{1}$ depending only on $R_0$.

In the above proof, to avoid $R+R_0=0$ in the denominator, we can replace $R_0$ by $R_0+\epsilon$ and let $\epsilon \to 0$ eventually.

The proof of the theorem is completed.
\end{proof}

As shown in Theorem \ref{Theorem B} above, Hamilton obtained a bound on $\Delta u$ under the condition that the Ricci curvature is bounded from below. When the metric evolves by Ricci flow (\ref{def. of Ricci flow}), we find that this condition can be replaced by that the scalar curvature being bounded. Details are presented in the following Theorem.

 To state the theorem, recall the quantity $\nu[g,\tau]:=\inf_{0<\tau'<\tau}\mu[g,\tau']$, where $\mu[g,\tau]$ denotes Perelman's $\mu$-entropy with parameter $\tau$.  See \cite{Perelman} or \cite{Zhang and Bamler} for more details e.g. As shown in the proof of Theorem \ref{Thm-reverse Harnack inequality}, the following  theorem can infer the backward Harnack inequality, but we will not carry it out this time.

\begin{theorem}\label{Hamilton Thm of Laplace(u)}
Let $(\mathrm{\bf{M}}^{n},g(t))$, $t\in(0,1)$, be a solution to the Ricci flow (\ref{def. of Ricci flow}) on an n-dimensional compact Riemannian manifold whose scalar curvature satisfies $|R|\leq R_{0}$ for some positive constant $R_{0}$. Suppose $u: \mathrm{\bf{M}}\times(0,1)\rightarrow(0,\infty)$ is a positive solution to the heat equation (\ref{def. of heat eq.}) in $\mathrm{\bf{M}}\times(0,1)$ and
$A=\sup_{\mathrm{\bf{M}}\times(0,1)} u(x, t)$. Then the following holds
$$\frac{\partial_{t}u}{u}\leq \frac{\zeta}{t}\ln\left(\frac{eA}{u}\right)$$
for a constant $\zeta$ depending only on $\nu[g_{0},2]$, $R_{0}$, $n$ and diameters  of $(\mathrm{\bf{M}}^{n},g(t))$.
\end{theorem}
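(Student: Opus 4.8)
The plan is to adapt Hamilton's argument for Theorem \ref{Theorem B} to the Ricci flow setting, replacing the role of the static heat kernel bounds by the on-diagonal heat kernel estimate available under a $\nu$-entropy lower bound (which is preserved along the flow). First I would compute the evolution of the relevant quantity under the coupled system. Writing $h := \ln(eA/u) > 0$ (so $h \ge 1$ and $u = eA\,e^{-h}$), a direct computation using $\partial_t u = \Delta u$ and $\partial_t g = -2\mathrm{Ric}$ gives an equation of the form $(\partial_t - \Delta) h = -|\nabla h|^2$; more to the point, I want to control $w := \partial_t u / u = \Delta u / u = -\Delta h + |\nabla h|^2$. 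Following Hamilton, I would instead look at $t\,\Delta u$ and the combination $t\,\Delta u - \zeta u\,\ln(eA/u)$, compute $(\partial_t - \Delta)$ of it along the flow, and use Bochner's formula together with the Ricci flow evolution of the Laplacian ($\partial_t \Delta = \Delta \partial_t + 2\mathrm{Ric}\cdot\nabla^2$) to produce a good square term $\sim t|\nabla^2 \ln u|^2$ that dominates the error terms generated by the curvature. The scalar curvature bound $|R|\le R_0$ controls the terms coming from $\partial_t g$ at the level needed, exactly as $|\mathrm{Ric}|$ was controlled via $\mathrm{Ric}\ge -Kg$ in the static case, except now we only have control on the trace, which forces us to absorb the full Ricci into square terms of the type $2tu|\mathrm{Ric}|^2 \ge 0$ (with favorable sign) rather than estimate it crudely.

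Next I would run the maximum principle. The obstruction, and the reason the extra geometric constants $\nu[g_0,2]$ and the diameters enter, is that the bound must be anchored: near $t \to 0^+$ or at an interior maximum of the test quantity one needs an a priori upper bound on $u$ in terms of $A$ away from zero, i.e. a quantitative statement that $u$ cannot be exponentially small compared to $A$ without $d(x,y)$ being large — but here there is no fixed spatial pole. The clean way is: since $u$ solves the heat equation under Ricci flow and $|R|\le R_0$, the representation $u(x,t) = \int_{\mathbf M} G(x,t;y,s) u(y,s)\,dy$ holds with $G$ the heat kernel of the time-dependent operator, and by Perelman's no-local-collapsing / the $\nu$-entropy lower bound one has a Gaussian-type on-diagonal upper bound $G(x,t;y,s) \le C(\nu[g_0,2],R_0,n)\,(t-s)^{-n/2}$ and hence, combined with the total-mass control $\int u \le A\,|\mathbf M|$ and the diameter bound, a two-sided comparison $\ln(eA/u(x,t)) \le \zeta_0(1 + \mathrm{diam}^2/t)$ of the same shape as \eqref{Zhang-CAG-log(A/u) bound}. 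This plays the role in the Ricci-flow setting that \cite[p267]{Zhang-CAG} played in Step 1 of Theorem \ref{Thm-reverse Harnack inequality}.

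With that lower bound on $h=\ln(eA/u)$ in hand, I would conclude as in the proof of Theorem \ref{Hamilton Thm of grad(u)}: assume the test function $\mathcal Q := t\,\Delta u - \zeta\, u\,\ln(eA/u)$ (or, more robustly, $t(\Delta u + \text{auxiliary lower-order corrections}) - \zeta u\ln(eA/u)$) attains a positive maximum at some interior point $(x_0,t_0)$; at that point $\nabla \mathcal Q = 0$, $\Delta \mathcal Q \le 0$, $\partial_t \mathcal Q \ge 0$; feed these into the evolution inequality, use the sign of the Bochner square term and the inequality $\ln(eA/u)\ge 1$ to generate a strictly negative quantity once $\zeta$ is taken large depending on $R_0$, $n$, $\nu[g_0,2]$ and $\mathrm{diam}$, contradicting maximality; the boundary/initial behavior as $t_0\to 0$ is handled by the factor $t$ which kills $t\Delta u$ while $\zeta u \ln(eA/u) \ge 0$, and near $t\to 1^-$ by working on $[0,1-\omega]$ and letting $\omega\to 0$ together with the standard approximation making $g$ and $u$ smooth up to the closed interval. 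The main obstacle I anticipate is precisely the anchoring step — proving the clean on-diagonal heat-kernel upper bound under only $|R|\le R_0$ plus a $\nu$-entropy lower bound, with constants depending only on the stated quantities — and then bookkeeping how large $\zeta$ must be so that the curvature error terms $\frac{t[2(R+R_0)^2+\beta_2]}{R+R_0}R\,u$-type expressions are dominated by $\tfrac{\zeta^2}{t}u$; both are routine in spirit but require care to keep the dependence exactly as advertised.
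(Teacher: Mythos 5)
Your overall strategy (a Hamilton-type test function, auxiliary terms whose evolution manufactures $|\mathrm{Ric}|^2$ and $|\nabla R|^2$ to complete squares when only $|R|\le R_0$ is available, and heat-kernel bounds under a $\nu$-entropy lower bound to ``anchor'' the argument) matches the spirit of the paper's Lemma \ref{evolution of Q} and Lemma \ref{heat kernel lemma}. But there is a genuine gap in the step where you run the maximum principle \emph{directly on a general positive solution}. The auxiliary terms that are forced on you by the weak curvature hypothesis --- in the paper, $-\frac{\lambda_1}{t}(R+R_0)u$ and $+\frac{\lambda_2}{t}\frac{u}{R+R_0}$ --- inevitably generate error terms of size $u/t$ in the evolution of the test quantity (see the term $-\frac{\lambda_2 u}{t(R+R_0)}$ in \eqref{Q-ineq.}). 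To absorb these into $\zeta\, u\ln(eA/u)$ one needs a \emph{lower} bound $\ln(eA/u)\gtrsim \underline{D}^2/t$, i.e.\ that $u$ is exponentially small compared with its supremum. This holds for the heat kernel $G(x,t;y,T/4)$ by the off-diagonal Gaussian \emph{upper} bound of Bamler--Zhang (this is precisely \eqref{333}), but it fails for a general positive solution: at the space-time point where $u=A$ one has $\ln(eA/u)=1$, while the error is of order $u/t$, so no fixed $\zeta$ closes the argument. Your proposed anchoring inequality $\ln(eA/u)\le \zeta_0(1+\mathrm{diam}^2/t)$ is an upper bound on $\ln(eA/u)$; it addresses the secondary issue of controlling $|\nabla u|^4/u^3$ (Case 3 in the paper), not the essential one, and appealing to ``$\ln(eA/u)\ge 1$'' at the maximum point does not suffice.

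The paper's way around this is a two-stage structure that your proposal omits entirely: first prove the pointwise differential inequality only for the heat kernel $u=G(\cdot,\cdot;y,T/4)$, where the two-sided Gaussian bounds give both the needed lower and upper bounds on $\ln(eA/G)$; then transfer the estimate to an arbitrary positive solution via the reproducing formula $u(x,t)=\int_{\mathbf M}G(x,t;y,T/4)\,u(y,T/4)\,dy$ and \emph{Jensen's inequality} for the concave function $\ln$, which shows directly that $u\ln(eA/u)-\frac{t}{\zeta}\partial_t u\ge \int G\,u(\cdot,T/4)\bigl[\ln\bigl(\tfrac{eA_T}{G}\bigr)-\tfrac{t}{\zeta}\tfrac{\partial_t G}{G}\bigr]dy\ge 0$. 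This Jensen step is the key missing idea; without it, or some substitute for the lower bound on $\ln(eA/u)$, the direct maximum-principle argument you describe cannot be completed.
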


To facilitate the proof of Theorem \ref{Hamilton Thm of Laplace(u)}, we first present the following Lemmas.

\begin{lemma}\label{evolution of Q}
Let $(\mathrm{\bf{M}}^{n},g(t))$ be a solution to the Ricci flow (\ref{def. of Ricci flow}) on an n-dimensional compact Riemannian manifold. Suppose $u$ is a solution associated with the heat equation (\ref{def. of heat eq.}) in $\mathrm{\bf{M}}\times(0,T)$. Then the function
$$Q=\Delta u+\delta\frac{|\nabla u|^{2}}{u}-\frac{\lambda_1}{t}(R+R_{0})u+\frac{\lambda_{2}}{t}\frac{u}{R+R_{0}}$$
satisfies the equality
\begin{equation*}
\begin{aligned}
(\Delta-\partial_{t})Q
=&\frac{1}{u}\left(u_{ij}-\frac{u_{i}u_{j}}{u}-uR_{ij}\right)^{2}
+\frac{2\delta-1}{u}\left(u_{ij}-\frac{u_{i}u_{j}}{u}\right)^{2}\\
&+\frac{1}{(\frac{2\lambda_1}{t}-1)u}\left(\left(\frac{2\lambda_1}{t}-1\right)uR_{ij}
-\frac{u_{i}u_{j}}{u}\right)^{2}-\frac{1}{\frac{2\lambda_1}{t}-1}\frac{|\nabla u|^{4}}{u^{3}}\\
&+\frac{\lambda_{1}u}{t(R+R_{0})}\left|\nabla R-\frac{R+R_{0}}{u}\nabla u\right|^{2}
-\frac{\lambda_{1}}{t^{2}}(R+R_{0})u-\frac{\lambda_{1}(R+R_{0})}{t}\frac{|\nabla u|^{2}}{u}\\
&+\frac{\lambda_{2}}{t^{2}}\frac{u}{R+R_{0}}+\left(\frac{\lambda_{2}}{t(R+R_{0})^{3}}
-\frac{\lambda_{1}}{t(R+R_{0})}\right)u|\nabla R|^{2}
+\frac{2\lambda_{2}u|\mathrm{Ric}|^{2}}{t(R+R_{0})^{2}}\\
&+\frac{\lambda_{2}u}{t(R+R_{0})}\left|\frac{\nabla R}{R+R_{0}}-\frac{\nabla u}{u}\right|^{2}
-\frac{\lambda_{2}}{t(R+R_{0})}\frac{|\nabla u|^{2}}{u},
\end{aligned}
\end{equation*}
where $\delta>\frac{1}{2}$, $\lambda_{1}>0$, $\lambda_{2}>0$ are constants.
\end{lemma}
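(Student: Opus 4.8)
My approach is purely computational: compute $(\Delta-\partial_t)$ applied to each of the four summands of $Q$ separately, then recombine the cross terms into the squares displayed in the statement. The only inputs are the standard evolution identities under the Ricci flow \eqref{def. of Ricci flow}: the commutation rule $\partial_t(\Delta f)=\Delta(\partial_t f)+2R_{ij}\nabla_i\nabla_j f$ (which rests on the contracted second Bianchi identity), $\partial_t R=\Delta R+2|\mathrm{Ric}|^2$, $\partial_t g^{ij}=2R^{ij}$, and Bochner's formula $\Delta|\nabla u|^2=2|\nabla^2u|^2+2\langle\nabla u,\nabla\Delta u\rangle+2\mathrm{Ric}(\nabla u,\nabla u)$. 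Throughout, $u$ solves \eqref{def. of heat eq.}, so $(\Delta-\partial_t)u=0$ and $\Delta u=\partial_t u$.

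First I will treat $\Delta u$ and $\delta|\nabla u|^2/u$. The commutation rule together with $\Delta u=\partial_t u$ gives $(\Delta-\partial_t)(\Delta u)=-2R_{ij}u_{ij}$; Bochner together with $\partial_t g^{ij}=2R^{ij}$ gives $(\Delta-\partial_t)|\nabla u|^2=2|\nabla^2u|^2$ (the two $\mathrm{Ric}(\nabla u,\nabla u)$ contributions cancel); and the quotient rule then yields $(\Delta-\partial_t)\big(\delta|\nabla u|^2/u\big)=\frac{2\delta}{u}\big|u_{ij}-u_iu_j/u\big|^2$. Next, from $(\Delta-\partial_t)(Ru)=2\langle\nabla R,\nabla u\rangle-2|\mathrm{Ric}|^2u$ (and $(\Delta-\partial_t)(R_0u)=0$), and differentiating the factor $1/t$, I obtain
\[
(\Delta-\partial_t)\Big(-\tfrac{\lambda_1}{t}(R+R_0)u\Big)=-\tfrac{2\lambda_1}{t}\langle\nabla R,\nabla u\rangle+\tfrac{2\lambda_1}{t}|\mathrm{Ric}|^2u-\tfrac{\lambda_1}{t^2}(R+R_0)u,
\]
and I complete the $\langle\nabla R,\nabla u\rangle$ term into $\frac{\lambda_1 u}{t(R+R_0)}\big|\nabla R-\frac{R+R_0}{u}\nabla u\big|^2$, leaving the remainders $-\frac{\lambda_1 u}{t(R+R_0)}|\nabla R|^2$ and $-\frac{\lambda_1(R+R_0)}{t}\frac{|\nabla u|^2}{u}$. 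Finally, using $(\Delta-\partial_t)(R+R_0)=-2|\mathrm{Ric}|^2$, the quotient rule for $u/(R+R_0)$ and differentiating $1/t$ produce $\frac{\lambda_2}{t^2}\frac{u}{R+R_0}$, $\frac{2\lambda_2u|\mathrm{Ric}|^2}{t(R+R_0)^2}$, a $|\nabla R|^2$ term, and $-\frac{2\lambda_2}{t(R+R_0)^2}\langle\nabla R,\nabla u\rangle$, the last of which I complete into $\frac{\lambda_2u}{t(R+R_0)}\big|\frac{\nabla R}{R+R_0}-\frac{\nabla u}{u}\big|^2$, leaving $|\nabla R|^2$ and $\frac{|\nabla u|^2}{u}$ remainders.

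Adding the four contributions, everything is already in the claimed form except the three terms $-2R_{ij}u_{ij}$, $\frac{2\delta}{u}|u_{ij}-u_iu_j/u|^2$, $\frac{2\lambda_1}{t}|\mathrm{Ric}|^2u$, together with the leftover $|\nabla R|^2$ pieces (which merge into $\big(\frac{\lambda_2}{t(R+R_0)^3}-\frac{\lambda_1}{t(R+R_0)}\big)u|\nabla R|^2$) and the leftover $|\nabla u|^2/u$ pieces. Writing $H_{ij}:=u_{ij}-u_iu_j/u$, the decisive algebraic identity to verify is
\[
-2R_{ij}u_{ij}+\frac{2\delta}{u}|H|^2+\frac{2\lambda_1}{t}|\mathrm{Ric}|^2u=\frac{1}{u}\big|H_{ij}-uR_{ij}\big|^2+\frac{2\delta-1}{u}|H|^2+\frac{1}{(\frac{2\lambda_1}{t}-1)u}\Big|\big(\tfrac{2\lambda_1}{t}-1\big)uR_{ij}-\frac{u_iu_j}{u}\Big|^2-\frac{1}{\frac{2\lambda_1}{t}-1}\frac{|\nabla u|^4}{u^3},
\]
valid whenever $\frac{2\lambda_1}{t}\neq1$. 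This is checked by expanding both sides: the $R_{ij}u_iu_j/u$ terms and the $|\nabla u|^4/u^3$ terms on the two sides match, and the $|\mathrm{Ric}|^2$ coefficients balance because $1+\big(\frac{2\lambda_1}{t}-1\big)=\frac{2\lambda_1}{t}$. Collecting the remaining terms then yields exactly the expression in the statement.

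The argument uses no analysis — no maximum principle, no curvature bound, no integration by parts — so the only place requiring care is the bookkeeping in the several completions of squares. In particular, one must recognize that the $\frac{2\lambda_1}{t}|\mathrm{Ric}|^2u$ produced by the $-\frac{\lambda_1}{t}(R+R_0)u$ summand is precisely what is absorbed by the two Hessian-type squares above, and one must keep track separately of which $|\nabla R|^2$ and $|\nabla u|^2/u$ remainders come from the $(R+R_0)u$ summand and which from the $u/(R+R_0)$ summand; this is the part I expect to be the main (purely clerical) obstacle.
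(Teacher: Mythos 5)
Your proposal is correct and follows essentially the same route as the paper: compute $(\Delta-\partial_t)$ on each of the four summands of $Q$ using the standard Ricci-flow evolution identities, then recombine via completion of squares; your key algebraic identity for the Hessian/Ricci terms is exactly the paper's "perfect square formula" step (the coefficients balance because $1+(\tfrac{2\lambda_1}{t}-1)=\tfrac{2\lambda_1}{t}$), and the bookkeeping of the $|\nabla R|^2$ and $|\nabla u|^2/u$ remainders checks out.
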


\begin{proof}[\bf{Proof}]
We compute the heat operator acting on each term of $Q$ separately. First, a standard computation yields
$$(\Delta-\partial_{t})(\Delta u)=-2R_{ij}u_{ij}.$$
Also, we have
$$(\Delta-\partial_{t})\left(\frac{|\nabla u|^{2}}{u}\right)=\frac{2}{u}\left(u_{ij}-\frac{u_{i}u_{j}}{u}\right)^{2}.$$
Second, we obtain
$$(\Delta-\partial_{t})\left(\frac{1}{t}(R+R_{0})u\right)
=\frac{1}{t}\left(-2u|\mathrm{Ric}|^{2}+2\langle \nabla u, \nabla R \rangle\right)+\frac{1}{t^{2}}(R+R_{0})u.$$
Furthermore, we get
$$(\Delta-\partial_{t})\left(\frac{u}{t(R+R_{0})}\right)
=\frac{1}{t}\left(\frac{2u|\mathrm{Ric}|^{2}}{(R+R_{0})^{2}}+\frac{2u|\nabla R|^{2}}{(R+R_{0})^{3}}
-2\frac{\langle \nabla u, \nabla R\rangle}{(R+R_{0})^{2}}\right)+\frac{u}{t^{2}(R+R_{0})}.$$
Combining the above formulas and using the definition of $Q$ infers
\begin{equation*}
\begin{aligned}
(\Delta-\partial_{t})Q
=&-2R_{ij}u_{ij}+\frac{2\delta}{u}\left(u_{ij}-\frac{u_{i}u_{j}}{u}\right)^{2}
+\frac{2\lambda_{1}}{t}u|\mathrm{Ric}|^{2}
-\frac{2\lambda_{1}}{t}\langle \nabla u, \nabla R \rangle -\frac{\lambda_{1}}{t^{2}}(R+R_{0})u\\
&+\frac{2\lambda_{2}u|\mathrm{Ric}|^{2}}{t(R+R_{0})^{2}}
+\frac{2\lambda_{2}u|\nabla R|^{2}}{t(R+R_{0})^{3}}
-\frac{2\lambda_{2}\langle \nabla u, \nabla R\rangle}{t(R+R_{0})^{2}}
+\frac{\lambda_{2}u}{t^{2}(R+R_{0})}.
\end{aligned}
\end{equation*}
Thus, it follows from perfect square formula that
\begin{equation*}
\begin{aligned}
(\Delta-\partial_{t})Q
=&\frac{1}{u}\left(u_{ij}-\frac{u_{i}u_{j}}{u}-uR_{ij}\right)^{2}
+\frac{2\delta-1}{u}\left(u_{ij}-\frac{u_{i}u_{j}}{u}\right)^{2}
+\left(\frac{2\lambda_{1}}{t}-1\right)u|\mathrm{Ric}|^{2}\\
&-2R_{ij}\frac{u_{i}u_{j}}{u}
+\frac{1}{\frac{2\lambda_{1}}{t}-1}\frac{|\nabla u|^{4}}{u^{3}}
-\frac{1}{\frac{2\lambda_{1}}{t}-1}\frac{|\nabla u|^{4}}{u^{3}}
-\frac{2\lambda_{1}}{t}\langle \nabla u, \nabla R \rangle \\
&-\frac{\lambda_{1}}{t^{2}}(R+R_{0})u
+\frac{\lambda_{1}u|\nabla R|^{2}}{t(R+R_{0})}
+\frac{\lambda_{1}(R+R_{0})|\nabla u|^{2}}{tu}-\frac{\lambda_{1}(R+R_{0})|\nabla u|^{2}}{tu}\\
&+\left(\frac{\lambda_{2}}{t(R+R_{0})^{3}}-\frac{\lambda_{1}}{t(R+R_{0})}\right)u|\nabla R|^{2}
+\frac{2\lambda_{2}u|\mathrm{Ric}|^{2}}{t(R+R_{0})^{2}}
+\frac{\lambda_{2}u|\nabla R|^{2}}{t(R+R_{0})^{3}}\\
&-\frac{2\lambda_{2}\langle \nabla u, \nabla R\rangle}{t(R+R_{0})^{2}}
+\frac{\lambda_{2}|\nabla u|^{2}}{tu(R+R_{0})}
-\frac{\lambda_{2}|\nabla u|^{2}}{tu(R+R_{0})}+\frac{\lambda_{2}u}{t^{2}(R+R_{0})}.
\end{aligned}
\end{equation*}
Observe that the 3rd, 4th and 5th terms, 7th, 9th and 10th terms and 14th, 15th and 16th terms form complete squares, respectively. Hence we get Lemma \ref{evolution of Q}.
\end{proof}

\begin{lemma}\label{heat kernel lemma}
Let $(\mathrm{\bf{M}}^{n},g(t))$, $t\in(\frac{T}{2},T)$, be a solution to the Ricci flow (\ref{def. of Ricci flow}) on an n-dimensional compact Riemannian manifold, where $T\in(0,1)$ is a constant. Assume the scalar curvature is bounded, i.e., $|R|\leq R_{0}$ for some positive constant $R_{0}$. Suppose $u=u(x,t)=G(x,t;y,\frac{T}{4})$ is the heat kernel associated with the heat equation (\ref{def. of heat eq.}) in $\mathrm{\bf{M}}\times(\frac{T}{2},T)$. Then there exists a large constant $\rho$, only depending on $\nu[g_{0},2]$, $R_{0}$, diameters of $(\mathrm{\bf{M}}^{n},g(t))$ and $n$, such that
$$\frac{\partial_{t}u}{u}\leq \frac{\rho(1+t)}{t-\frac{T}{2}}\ln\left(\frac{eA}{u}\right)$$
for $A=\mathrm{sup}_{\mathrm{\bf{M}}\times(\frac{T}{2},T)}u$ and $(x,t)\in \mathrm{\bf{M}}\times(\frac{T}{2},T)$.
\end{lemma}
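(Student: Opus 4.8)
\textbf{Proof strategy for Lemma \ref{heat kernel lemma}.}
The plan is to run the maximum principle on the quantity $Q$ from Lemma \ref{evolution of Q}, exactly as in Hamilton's original argument for Theorem \ref{Theorem B}, but now exploiting the fact that the scalar curvature is bounded by $R_0$ (rather than assuming a two-sided Ricci bound). First I would fix the time origin at $\frac{T}{4}$ and work on the interval $(\frac{T}{2},T)$, writing $s=t-\frac{T}{4}$ (or simply keeping $t$ and using $t-\frac{T}{2}$ as the effective "elapsed time" in the denominators). I would then choose the parameters $\delta,\lambda_1,\lambda_2$ in
\[
Q=\Delta u+\delta\frac{|\nabla u|^{2}}{u}-\frac{\lambda_1}{t}(R+R_{0})u+\frac{\lambda_{2}}{t}\frac{u}{R+R_{0}}
\]
to make the right-hand side of the evolution identity in Lemma \ref{evolution of Q} manageable: take $\delta>\frac12$ so the term $\frac{2\delta-1}{u}(u_{ij}-u_iu_j/u)^2$ is a nonnegative square to be kept, and take $\lambda_2$ large compared with $R_0$ so that $\frac{\lambda_2}{t(R+R_0)^3}-\frac{\lambda_1}{t(R+R_0)}\ge 0$, again keeping a good term. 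The genuinely dangerous terms are the two with a definite sign problem: $-\frac{1}{\frac{2\lambda_1}{t}-1}\frac{|\nabla u|^4}{u^3}$ and the negative multiples of $\frac{|\nabla u|^2}{u}$ and of $(R+R_0)u$.

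Next I would combine the bad $\frac{|\nabla u|^4}{u^3}$ term against the ever-present good square $\frac{1}{u}(u_{ij}-u_iu_j/u-uR_{ij})^2+\frac{2\delta-1}{u}(u_{ij}-u_iu_j/u)^2$. The point is that $(u_{ij}-u_iu_j/u)^2\ge \frac1n(\Delta u-|\nabla u|^2/u)^2$, which, together with the definition of $Q$ (so that $\Delta u$ can be expressed through $Q$ plus lower-order pieces), lets me bound $|\nabla u|^4/u^3$ by a controlled multiple of the good squares plus terms linear in $Q$ and in $(R+R_0)u/t^2$, $u/(t^2(R+R_0))$; here choosing $\lambda_1$ of order $t$ (so $\frac{2\lambda_1}{t}-1$ is a fixed positive constant like $1$ — i.e. $\lambda_1\sim t$, which in turn is why one gets the factor $(1+t)$ in the final bound rather than a clean $1/(t-T/2)$) keeps the coefficient $\frac{1}{\frac{2\lambda_1}{t}-1}$ bounded. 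After absorbing, I would be left with a differential inequality of the schematic form $(\Delta-\partial_t)Q\ge -\frac{c_1}{t}Q-\frac{c_2}{t^2}u$, where $c_1,c_2$ depend only on $n,R_0$ and the chosen parameters, together with the crucial fact that $\Delta u$ is bounded above on $\mathbf{M}\times(\frac T2,T)$ via Theorem \ref{Theorem B} applied with the lower Ricci bound that the boundedness of $R$ and type-III-free estimates do \emph{not} directly give — so instead I would use the heat kernel upper bound under Ricci flow with bounded scalar curvature from \cite{Zhang and Bamler} (depending on $\nu[g_0,2]$, $R_0$, diameters) to get both the upper bound $A$ on $u$ and a lower bound $\ln(eA/u)\le \rho_0(1+\mathrm{diam}^2/(t-T/4))$ as in \eqref{Zhang-CAG-log(A/u) bound}.

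Finally I would feed $Q$ into the maximum principle of \cite[Proposition 4.3]{Bennett Chow 2004}: since $u=G(\cdot,\cdot;y,\frac T4)$ is smooth and positive on the \emph{closed} subintervals of $(\frac T2,T)$, and $Q$ contains the negative term $-\frac{\lambda_1}{t-\frac T2}(R+R_0)u\,\ln(\cdots)$-type ballast (I would actually replace the naked $\frac1t$ weights in Lemma \ref{evolution of Q} by $\frac{1}{t-T/2}$ throughout, which only changes the evolution identity by manifestly good terms since $\partial_t\frac{1}{t-T/2}<0$), I can arrange $Q<0$ near $t=\frac T2^{+}$ by the short-time behaviour of the kernel, and then the differential inequality $(\Delta-\partial_t)Q\ge V\,Q$ for a bounded $V$ forces $Q\le 0$ on all of $\mathbf{M}\times(\frac T2,T)$. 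Reading off $Q\le0$ and discarding the manifestly nonnegative term $\frac{\lambda_2}{t-T/2}\frac{u}{R+R_0}$, then using $|R|\le R_0$ to compare $\frac{\lambda_1}{t-T/2}(R+R_0)u\le \frac{2\lambda_1 R_0}{t-T/2}u$ and the heat equation $\partial_t u=\Delta u$, gives
\[
\frac{\partial_t u}{u}=\frac{\Delta u}{u}\le \frac{\rho(1+t)}{t-\frac T2}\ln\!\left(\frac{eA}{u}\right)
\]
with $\rho$ depending only on $\nu[g_0,2]$, $R_0$, the diameters and $n$, after absorbing the constant-order terms into the logarithm using $\ln(eA/u)\ge 1$. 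The main obstacle is the middle step: controlling the sign-indefinite $|\nabla u|^4/u^3$ term by the good squares uniformly in $t\in(\frac T2,T)$ while keeping every emerging constant independent of the full curvature tensor (only $R_0$, not $|\mathrm{Ric}|$ or $|\mathrm{Rm}|$, may appear) — this is exactly where the auxiliary terms $\pm\frac{1}{R+R_0}u$ are needed to neutralize $|\nabla R|^2$ and $|\mathrm{Ric}|^2$, and the bookkeeping has to be done carefully enough that the final $\rho$ only sees the allowed quantities.
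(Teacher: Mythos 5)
Your overall architecture matches the paper's: a maximum-principle argument on the quantity $Q$ of Lemma \ref{evolution of Q}, with the Bamler--Zhang heat kernel bounds supplying the dependence on $\nu[g_0,2]$, $R_0$ and the diameters. But there are two concrete gaps. The central one is your treatment of the sign-indefinite term $-\frac{1}{\frac{2\lambda_1}{t}-1}\frac{|\nabla u|^4}{u^3}$. You claim it can be absorbed into the good squares ``plus terms linear in $Q$''; this is false in the regime where $\Delta u$ is comparable to $\frac{|\nabla u|^2}{u}$, say $\frac12\frac{|\nabla u|^2}{u}\le \Delta u\le \psi\frac{|\nabla u|^2}{u}$, because there the square $\frac{1}{u}\bigl(\Delta u-\frac{|\nabla u|^2}{u}\bigr)^2$ degenerates and gives you nothing against $\frac{|\nabla u|^4}{u^3}$. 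The paper handles this by a three-case analysis: the squares kill the quartic term only when $\Delta u\ge\psi\frac{|\nabla u|^2}{u}$ or $\Delta u\le\frac12\frac{|\nabla u|^2}{u}$, and in the intermediate case one must import the a priori Hamilton-type gradient estimate $\frac{|\nabla u|^2}{u^2}\le\frac{1}{t-T/2}\ln\bigl(\frac{A}{u}\bigr)$ from \cite{Zhang 2006} together with the \emph{two-sided} heat kernel bounds of \cite{Zhang and Bamler}, which give the uniform upper bound $\ln\bigl(\frac{A}{u}\bigr)\le \frac{c_6\overline{D}^2}{T}$; only then does $(t-\frac{T}{2})\cdot\frac{1}{\frac{2\lambda_1}{t}-1}\frac{|\nabla u|^4}{u^3}$ reduce to a bounded multiple of $\frac{|\nabla u|^2}{u}$ that $\rho$ can absorb. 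Your sketch uses the kernel bounds only for the lower bound on $\ln(eA/u)$ (needed to absorb $\frac{\lambda_2 u}{t(R+R_0)}$), not for this second, essential purpose.

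The second gap is the initial condition for the maximum principle. You propose to arrange $Q<0$ near $t=\frac{T}{2}^{+}$ ``by the short-time behaviour of the kernel,'' but the pole of $G(\cdot,\cdot;y,\frac{T}{4})$ is at $\frac{T}{4}$, so at $t=\frac{T}{2}$ the kernel is already smooth and $\Delta u$ has no favourable sign. The paper avoids this by working with $\Omega=(t-\frac{T}{2})Q-\rho(1+t)u\ln(\frac{eA}{u})$, whose prefactor forces $\Omega\le 0$ at $t=\frac{T}{2}$ regardless of the sign of $Q$; your replacement of the $\frac1t$ weights by $\frac{1}{t-T/2}$ does not achieve this for the leading $\Delta u+\delta\frac{|\nabla u|^2}{u}$ part of $Q$. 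A minor further discrepancy: the factor $(1+t)$ in the final estimate comes from the coefficient $\rho(1+t)$ in $\Omega$, not from taking $\lambda_1\sim t$ (the paper keeps $\lambda_1>1$ a large constant), and Case~1 of the paper additionally needs the self-improving step $(t-\frac{T}{2})\Delta u\ge(\rho-\delta)u$ extracted from the hypothesis $\Omega\ge 0$ to dominate the lone $-\Delta u$ term, which your sketch does not account for.
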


\begin{proof}[\bf{Proof}]
Pick $\rho \geq 1$ to be determined later, and denote
$$\Omega:=\left(t-\frac{T}{2}\right)Q-\rho(1+t)u\ln\left(\frac{eA}{u}\right).$$
The goal is to show $\Omega \leq 0$. By the maximum principle we only need to prove that $$(\Delta-\partial_{t}) \Omega \geq 0$$ where $\Omega \geq 0$. So the following calculation takes places where $\Omega \geq 0$.
It follows from Lemma \ref{evolution of Q} that
\begin{equation}\label{Q-ineq.}
\begin{aligned}
(\Delta-\partial_{t})\Omega
\geq&\left(t-\frac{T}{2}\right)\left\{\frac{1}{nu}\left(\Delta u-\frac{|\nabla u|^{2}}{u}-uR\right)^{2}
+\frac{2\delta-1}{nu}\left(\Delta u-\frac{|\nabla u|^{2}}{u}\right)^{2}\right.\\
&\left.+\frac{1}{n(\frac{2\lambda_{1}}{t}-1)u}\left[\left(\frac{2\lambda_{1}}{t}-1\right)uR
-\frac{|\nabla u|^{2}}{u}\right]^{2}
-\frac{1}{\frac{2\lambda_{1}}{t}-1}\frac{|\nabla u|^{4}}{u^{3}}\right\}\\
&+\left[\rho(1+t)-\frac{t-\frac{T}{2}}{t}\left(\frac{\lambda_{2}}{R+R_{0}}+\lambda_{1}(R+R_{0})\right)
-\delta\right]\frac{|\nabla u|^{2}}{u}-\Delta u\\
&-\frac{\lambda_{2}u}{t(R+R_{0})}+\rho u\ln\left(\frac{eA}{u}\right)
\end{aligned}
\end{equation}
for large constant $\lambda_{2}$ depending only on $R_0$ and $\lambda_1$ and $(x,t)\in \mathrm{\bf{M}}\times\left(\frac{T}{2},T\right)$, where we have thrown out some positive terms.

According to Theorem 1.4 of \cite{Zhang and Bamler}, we have
\begin{equation*}
u(x,t) \leq c_{1}T^{-\frac{n}{2}}\exp\left(-\frac{c_{2}\underline{D}^{2}}{t-\frac{T}{4}}\right),
\end{equation*}
where $c_{1}$ and $c_{2}$ only depend on $\nu[g_{0},2]$, $R_{0}$ and $n$, and $\underline{D}:=\inf_{t\in(0,1)}\mathrm{diam}_{t}$, here $\mathrm{diam}_{t}$ is the diameter induced by the metric $g(t)$. This implies that
\begin{equation*}
\ln\left(\frac{eA}{u(x,t)}\right) \geq \frac{c_{3}\underline{D}^{2}}{t-\frac{T}{4}},
\end{equation*}
where $c_{3}$ only depending on $\nu[g_{0},2]$, $R_{0}$ and $n$. Then by the boundedness of the scalar curvature $R$ and $t-\frac{T}{4}<t$, we derive
\begin{equation}\label{333}
\begin{aligned}
\rho u\ln\left(\frac{eA}{u}\right)-\frac{\lambda_{2}u}{t(R+R_{0})}
\geq& \rho u \frac{c_{3}\underline{D}^{2}}{t}-\frac{\lambda_{2}u}{t(R+R_{0})}\\
\geq& 0
\end{aligned}
\end{equation}
for a large constant $\rho$  only depending on $\nu[g_{0},2]$, $R_{0}$, $\underline{D}$ and $n$.
Consequently, \eqref{Q-ineq.} becomes
\begin{equation}\label{L-ineq.}
\begin{aligned}
(\Delta-\partial_{t})\Omega
\geq&\left(t-\frac{T}{2}\right)\left\{\frac{1}{nu}\left(\Delta u-\frac{|\nabla u|^{2}}{u}-uR\right)^{2}
+\frac{2\delta-1}{nu}\left(\Delta u-\frac{|\nabla u|^{2}}{u}\right)^{2}\right.\\
&\left.+\frac{1}{n(\frac{2\lambda_{1}}{t}-1)u}\left[\left(\frac{2\lambda_{1}}{t}-1\right)uR
-\frac{|\nabla u|^{2}}{u}\right]^{2}
-\frac{1}{\frac{2\lambda_{1}}{t}-1}\frac{|\nabla u|^{4}}{u^{3}}\right\}\\
&+\left[\rho(1+t)-\frac{t-\frac{T}{2}}{t}\left(\frac{\lambda_{2}}{R+R_{0}}+\lambda_{1}(R+R_{0})\right)
-\delta\right]\frac{|\nabla u|^{2}}{u}-\Delta u.
\end{aligned}
\end{equation}
Now we claim that
$$(\Delta-\partial_{t})\Omega \geq 0$$
and we see this by examining the following three cases.

Case 1: If
$$\Delta u\geq \psi\frac{|\nabla u|^{2}}{u},$$
where $\psi$ is a large positive constant, we have
$$\Delta u-\frac{|\nabla u|^{2}}{u}\geq(\psi-1)\frac{|\nabla u|^{2}}{u}.$$
This shows that
$$\frac{\delta-1}{nu}\left(\Delta u-\frac{|\nabla u|^{2}}{u}\right)^{2}
\geq \frac{\delta-1}{n}(\psi-1)^{2}\frac{|\nabla u|^{4}}{u^{3}}.$$
So we can cancel out the bad term $-\dfrac{1}{\frac{2\lambda_{1}}{t}-1}\dfrac{|\nabla u|^{4}}{u^{3}}$ by choosing the sufficiently large absolute constant $\psi$.
Moreover, according to $\frac{|\nabla u|^{2}}{u^{2}}\leq \frac{1}{t-\frac{T}{2}}\ln\left(\frac{A}{u}\right)$ (c.f. \cite[Theorem 3.3]{Zhang 2006}), we obtain
$$\delta\left(t-\frac{T}{2}\right)\frac{|\nabla u|^{2}}{u} \leq \delta u \ln\left(\frac{eA}{u}\right).$$
When $\Omega \geq 0$, i.e.,
$$\left(t-\frac{T}{2}\right)\Delta u
\geq \rho (1+t)u\ln\left(\frac{eA}{u}\right)-\delta\left(t-\frac{T}{2}\right)\frac{|\nabla u|^{2}}{u}
+\frac{t-\frac{T}{2}}{t}u\left[\lambda_{1}(R+R_{0})-\frac{\lambda_{2}}{R+R_{0}}\right],$$
we derive
$$\left(t-\frac{T}{2}\right)\Delta u
\geq [\rho (1+t)-\delta]u\ln\left(\frac{eA}{u}\right)
-\frac{t-\frac{T}{2}}{t} \frac{\lambda_{2}u}{R+R_{0}}.$$
Similar to \eqref{333}, we find
$$\rho t u\ln\left(\frac{eA}{u}\right)-\frac{t-\frac{T}{2}}{t} \frac{\lambda_{2}u}{R+R_{0}} \geq 0$$
for a large constant $\rho$ only depending on $\nu[g_{0},2]$, $R_{0}$, $\underline{D}$ and $n$. This indicates that
\begin{equation}\label{4444}
\left(t-\frac{T}{2}\right)\Delta u \geq (\rho-\delta)u.
\end{equation}
Using $\Delta u\geq \psi\frac{|\nabla u|^{2}}{u}$, we have
\begin{equation*}
\frac{\delta}{nu}\left(\Delta u-\frac{|\nabla u|^{2}}{u}\right)^{2} \geq \frac{\delta}{n}\left(1-\frac{1}{\psi}\right)^{2}\frac{(\Delta u)^{2}}{u}.
\end{equation*}
It follows from \eqref{4444} that
\begin{equation*}
\left(t-\frac{T}{2}\right)\frac{\delta}{nu}\left(\Delta u-\frac{|\nabla u|^{2}}{u}\right)^{2}-\Delta u
\geq \frac{\delta}{n}\left(1-\frac{1}{\psi}\right)^{2}(\rho-\delta)\Delta u-\Delta u
\geq 0
\end{equation*}
for any constant $\rho \geq \delta + \frac{n}{\delta(1-\frac{1}{\psi})^{2}}$.
Hence, by \eqref{L-ineq.}, we conclude that
$$(\Delta-\partial_{t})\Omega \geq 0$$
when $\rho$ is suitable.

Case 2: If
$$\Delta u\leq \frac{1}{2}\frac{|\nabla u|^{2}}{u},$$
we get
\begin{equation*}
\begin{aligned}
\left(\Delta u-\frac{|\nabla u|^{2}}{u}\right)^{2}
&=\left(\frac{|\nabla u|^{2}}{u}-\Delta u\right)^{2}\\
&\geq\frac{1}{4}\frac{|\nabla u|^{4}}{u^{2}}.
\end{aligned}
\end{equation*}
From this, we deduce that
$$\frac{2\delta-1}{nu}\left(\Delta u-\frac{|\nabla u|^{2}}{u}\right)^{2}\geq\frac{2\delta-1}{4n}\frac{|\nabla u|^{4}}{u^{3}},$$
having cancelled the bad term $-\dfrac{1}{\frac{2\lambda_{1}}{t}-1}\dfrac{|\nabla u|^{4}}{u^{3}}$ by taking $\delta$ sufficiently large.
So it follows from $\Delta u\leq \frac{1}{2}\frac{|\nabla u|^{2}}{u}$ that
$$(\Delta-\partial_{t})\Omega \geq 0$$
for sufficiently large $\rho$.

Case 3: If
$$\frac{1}{2}\frac{|\nabla u|^{2}}{u}\leq \Delta u \leq \psi\frac{|\nabla u|^{2}}{u},$$
then the squares terms in \eqref{L-ineq.} become useless, which are thrown away, obtaining
\begin{equation}\label{Case 3-L ineq.}
\begin{aligned}
(\Delta-\partial_{t})\Omega
\geq&\left(t-\frac{T}{2}\right)
\left[-\frac{1}{\frac{2\lambda_{1}}{t}-1}\frac{|\nabla u|^{4}}{u^{3}}\right]\\
&+\left[\rho(1+t)-\frac{t-\frac{T}{2}}{t}\left(\frac{\lambda_{2}}{R+R_{0}}+\lambda_{1}(R+R_{0})\right)
-\delta\right]\frac{|\nabla u|^{2}}{u}-\Delta u.
\end{aligned}
\end{equation}
It follows from Theorem 3.3 of \cite{Zhang 2006} that
$$\frac{|\nabla u|^{2}}{u^{2}}\leq \frac{1}{t-\frac{T}{2}}\ln\left(\frac{A}{u}\right).$$
This implies that
\begin{equation}\label{111}
\begin{aligned}
\frac{1}{\frac{2\lambda_{1}}{t}-1}\frac{|\nabla u|^{4}}{u^{3}}
=&\frac{t}{2\lambda_{1}-t}\frac{|\nabla u|^{2}}{u}\cdot\frac{|\nabla u|^{2}}{u^{2}}\\
\leq& \frac{t}{2\lambda_{1}-t}\cdot\frac{1}{t-\frac{T}{2}}\ln\left(\frac{A}{u}\right)
\frac{|\nabla u|^{2}}{u}.
\end{aligned}
\end{equation}
According to (2.11) of \cite{Zhang and Bamler},
we derive
$$u(x,t) \geq c_{4}T^{-\frac{n}{2}}\exp\left\{-\frac{c_{5}\overline{D}^{2}}{T}\right\},$$
where $c_{4}$ and $c_{5}$ only depend on $\nu[g_{0},2]$, $R_{0}$ and $n$, and $\overline{D}:=\sup_{t\in(0,1)}\mathrm{diam}_{t}$, here $\mathrm{diam}_{t}$ is the diameter induced by the metric $g(t)$.
Combining this with an upper bound on $u$ in \cite[Theorem 1.4]{Zhang and Bamler}, we obtain
$$\ln\left(\frac{A}{u}\right) \leq \frac{c_{6}\overline{D}^{2}}{T},$$
where $c_6$ only depends on $\nu[g_{0},2]$, $R_{0}$ and $n$.
Substituting the above inequality into \eqref{111}, we now obtain
\begin{equation*}
\begin{aligned}
\frac{1}{\frac{2\lambda_{1}}{t}-1}\frac{|\nabla u|^{4}}{u^{3}}
\leq \frac{c_{6}\overline{D}^{2}}{2\lambda_{1}-t}\cdot\frac{1}{t-\frac{T}{2}}\frac{|\nabla u|^{2}}{u}.
\end{aligned}
\end{equation*}
This shows that
\begin{equation*}
\begin{aligned}
\left(t-\frac{T}{2}\right)\left[-\frac{1}{\frac{2\lambda_{1}}{t}-1}\frac{|\nabla u|^{4}}{u^{3}}\right]
\geq-c_{6}\overline{D}^{2}\frac{|\nabla u|^{2}}{u}
\end{aligned}
\end{equation*}
for large $\lambda_{1}>1$.
Therefore, by the above inequality and $\Delta u \leq \psi\frac{|\nabla u|^{2}}{u}$, \eqref{Case 3-L ineq.} becomes
\begin{equation*}
\begin{aligned}
(\Delta-\partial_{t})\Omega
\geq \left[\rho(1+t)-c_{6}\overline{D}^{2}-(\psi+\delta)
-\frac{t-\frac{T}{2}}{t}\left(\frac{\lambda_{2}}{R+R_{0}}
+\lambda_{1}(R+R_{0})\right)\right]\frac{|\nabla u|^{2}}{u}.
\end{aligned}
\end{equation*}
This asserts that
$$(\Delta-\partial_{t})\Omega \geq 0$$
when $\rho$ is large enough depending only on $\nu[g_{0},2]$, $R_{0}$, $\overline{D}$ and $n$.

Since $\left(t-\frac{T}{2}\right)Q-\rho(1+t)u\ln\left(\frac{eA}{u}\right)\leq 0$ at $t=\frac{T}{2}$, the maximum principle (\cite[Proposition 4.3]{Bennett Chow 2004}, c.f.) implies
$$\left(t-\frac{T}{2}\right)Q-\rho(1+t)u\ln\left(\frac{eA}{u}\right)\leq 0$$
for all $\frac{T}{2} \leq t \leq T$, thus the result follows.

\end{proof}

Using the Lemma \ref{evolution of Q} and Lemma \ref{heat kernel lemma}, we now provide the proof of Theorem \ref{Hamilton Thm of Laplace(u)}.

\begin{proof}[\bf{Proof of Theorem \ref{Hamilton Thm of Laplace(u)}}]
 Given any $T\in(0,1)$, the heat kernel $G=G(x,t;y,\frac{T}{4})$ of the heat equation (\ref{def. of heat eq.}), it follows from the Lemma \ref{heat kernel lemma} that there exists a positive constant $\rho$, such that
\begin{equation}\label{Lemma 1}
\begin{aligned}
\frac{\partial_{t}G}{G}\leq\frac{2\rho}{t-\frac{T}{2}}\ln\left(\frac{eA_T}{G}\right)
\end{aligned}
\end{equation}
for $(x,t)\in \mathrm{\bf{M}}\times(\frac{T}{2},T)$, where $A_T=\text{sup}_{\mathrm{\bf{M}}\times(\frac{T}{2},T)}G(x,t;y,\frac{T}{4})$.

Using the fact that
$$u(x,t)=\int_{\mathrm{\bf{M}}}G(x,t;y,T/4)u(y,T/4)\text{d}y,$$
where $\text{d}y=\text{d}g(\frac{T}{4})(y)$ and $t\in(0,1)$, we deduce that
\begin{equation*}
\begin{aligned}
u\ln\left(\frac{eA}{u}\right)-\frac{t}{\zeta}\partial_{t}u
\ge &\int_{\mathrm{\bf{M}}}G(x,t;y,T/4)u(y,T/4)\text{d}y\cdot
\ln\left(\frac{e A_T \int_{\mathrm{\bf{M}}}u(y,\frac{T}{4})\text{d}y}
{\int_{\mathrm{\bf{M}}}G(x,t;y,\frac{T}{4})u(y,\frac{T}{4})\text{d}y}\right)\\
&-\frac{t}{\zeta}\int_{\mathrm{\bf{M}}}\partial_{t}G(x,t;y,T/4)u(y,T/4)\text{d}y.
\end{aligned}
\end{equation*}
Application of the integral form of Jensen's inequality to the first term on the right-hand side yields
\begin{equation*}
\begin{aligned}
\int_{\mathrm{\bf{M}}}&G(x,t;y,T/4)u(y,T/4)\text{d}y\cdot
\ln\left(\frac{e A_T \int_{\mathrm{\bf{M}}}u(y,\frac{T}{4})\text{d}y}
{\int_{\mathrm{\bf{M}}}G(x,t;y,\frac{T}{4})u(y,\frac{T}{4})\text{d}y}\right)\\
& = \int_{\mathrm{\bf{M}}}G(x,t;y,T/4)u(y,T/4)\text{d}y\cdot
\ln\left(\frac{ \int_{\mathrm{\bf{M}}}G(x, t; y, \frac{T}{4}) u(y,\frac{T}{4})
\cdot \frac{e A_T}{G(x, t;y, \frac{T}{4})} \text{d}y}
{\int_{\mathrm{\bf{M}}}G(x,t;y,T/4)u(y,T/4)\text{d}y}\right)\\
&\geq\int_{\mathrm{\bf{M}}}G(x,t;y,T/4)u(y,T/4)
\cdot\ln\left(\frac{e A_T}{G(x,t;y,\frac{T}{4})}\right)\text{d}y.
\end{aligned}
\end{equation*}
Now we choose $t\in[\frac{3}{4}T,T]$.
Then combining this with (\ref{Lemma 1}) and $t-\frac{T}{2} \geq \frac{T}{4} \geq \frac{t}{4}$, we obtain that
\begin{equation}\label{Jensen's result}
\begin{aligned}
u\ln\left(\frac{eA}{u}\right)-\frac{t}{\zeta}\partial_{t}u
&\ge \int_{\mathrm{\bf{M}}}G(x,t;y,T/4)u(y,T/4)
\cdot\ln\left(\frac{e A_T}{G(x,t;y,\frac{T}{4})}\right)\text{d}y\\
&-\frac{t}{\zeta}\int_{\mathrm{\bf{M}}}\frac{8\rho}{t}\ln\left(\frac{e A_T}{G(x,t;y,T/4)}\right)
\cdot G(x,t;y,T/4)u(y,T/4)\text{d}y\\
&\ge 0,
\end{aligned}
\end{equation}
for any $\zeta\geq 8\rho$ and $x\in \mathrm{\bf{M}}$ and $t\in [\frac{3}{4}T,T]$.
Since $T\in(0,1)$ is arbitrary, we deduce that \eqref{Jensen's result} holds for all $t\in(0,1)$.
Hence
$$u\ln\left(\frac{eA}{u}\right)-\frac{t}{\zeta}\partial_{t}u\geq0$$
for a large constant $\zeta\geq 8\rho$ depending only on $\nu[g_{0},2]$, $R_{0}$, $n$ and diameters.

\end{proof}

\section{Matrix estimates for positive solutions to the heat equation under the Ricci flow}\label{section 4}
In this section, we prove Theorem \ref{matric heat eq.-Thm} by using the idea of Hamilton's tensor maximum principle.

\begin{proof}[\bf{Proof of Theorem \ref{matric heat eq.-Thm}}]

Setting $\varepsilon = 1$ and $\delta = 0$ in (2.2) of \cite{Zhang Qi and Li Xiaolong}, we get that $H_{ij} := \nabla_i \nabla_j \ln u$ satisfies $$(\partial_t - \Delta) H_{ij} = 2 H_{ij}^{2} + 2 R_{ikjl} H_{kl} - R_{ik} H_{jk} - R_{jk} H_{ik} + 2 R_{ikjl} \nabla_k v \nabla_l v + 2 \nabla_k H_{ij} \nabla_k v,$$
where $v=\ln u$. For a positive constant $\xi$, define the auxiliary tensor  $$Z_{ij} := H_{ij} + \frac{\xi}{t} g_{ij}.$$
Direct calculations show that
\begin{equation}\label{evolution eq. of tilde Z}
\begin{aligned}
(\partial_t - \Delta) Z_{ij}
=& 2 Z_{ij}^2 - \frac{4\xi}{t} Z_{ij} + 2 R_{ikjl} Z_{kl} - R_{ik} Z_{jk}- R_{jk} Z_{ik}+ 2 R_{ikjl} \nabla_k v \nabla_l v \\
&+ 2 \nabla_k Z_{ij} \nabla_k v + \left(\frac{2\xi^2}{t^2} - \frac{\xi}{t^2}\right) g_{ij} - \frac{2\xi}{t} R_{ij}.
\end{aligned}
\end{equation}
 By the type III assumption
 $$|\mathrm{Ric}|_{g(t)} \leq  \frac{\sigma}{t}, $$ we have
\begin{equation*}
\begin{aligned}
\left(\frac{2\xi^2}{t^2} - \frac{\xi}{t^2}\right) g_{ij} - \frac{2\xi}{t} R_{ij}
&\geq \left(\frac{2\xi^2}{t^2} - \frac{\xi}{t^2} - \frac{2\xi \sigma}{t^2} \right) g_{ij} \\
&\geq 0
\end{aligned}
\end{equation*}
for  $\xi \geq \frac{1+2\sigma}{2}$.
Hence substituting this into (\ref{evolution eq. of tilde Z}) yields
$$(\partial_t - \Delta) Z_{ij} \geq F_{ij} + 2 \nabla_k Z_{ij} \nabla_k v,$$
where
$$F_{ij} = 2 Z_{ij}^2 - \frac{4\xi}{t} Z_{ij} + 2 R_{ikjl} Z_{kl}- R_{ik} Z_{jk} - R_{jk} Z_{ik}.$$
Given any $\varepsilon>0$, define $$Z_{\varepsilon}(t):=Z(t)+\varepsilon(1+t)g(t_0), $$
and we have $\nabla_{g(t)}Z_{\varepsilon}=\nabla_{g(t)}Z$, $\Delta_{g(t)}(Z_{\varepsilon})_{ij}=\Delta_{g(t)}Z_{ij}$ at $t_{0}$, where $Z(t)$ is a tensor represented by $Z_{ij}$.
It follows from $Z_{ij} \geq 0$ as $t \to 0^+$ that $(Z_{\varepsilon})_{ij} > 0$ as $t \to 0^+$. Next we show that $$(Z_{\varepsilon})_{ij}(x,t)>0, \quad\forall (x,t)\in \mathrm{\bf{M}} \times (0,T).$$
Suppose that $(x_{0}, t_{0})$ is a point where there exists a vector $\vartheta$ such that $(Z_{\varepsilon})(\vartheta,\vartheta)(x_{0},t_{0})=0$
for the first time. Hence
$$(Z_{\varepsilon})(V,V)(x,t)>0$$
for all vector $V$, $x\in\mathrm{\bf{M}}$, and $t<t_{0}$.
Using parallel transport, we extend $\vartheta$ along geodesic rays starting from $x_0$ so that it becomes a parallel vector field in a neighborhood of $x_0$ with respect to $g(t_0)$. This vector field, still denoted by $\vartheta = \vartheta(x)$, is regarded as stationary about the time.  We then have at $(x_{0},t_{0})$
\begin{equation}\label{Zij}
\begin{aligned}
0 &\geq \partial_{t}(Z_{\varepsilon}(\vartheta,\vartheta))
=\vartheta^i \vartheta^j \partial_{t}(Z_\varepsilon)_{ij}\\
&\geq \vartheta^i \vartheta^j (\Delta Z_{ij}+F_{ij} + 2 \nabla_k Z_{ij} \nabla_k v)
+ \varepsilon \vartheta^i \vartheta^j g_{ij}\\
&= \Delta (\vartheta^i \vartheta^j (Z_{\varepsilon})_{ij})
+2 \nabla_k (\vartheta^i \vartheta^j (Z_{\varepsilon})_{ij})+\tilde{F}_{ij}\vartheta^i \vartheta^j
+ \varepsilon \vartheta^i \vartheta^j g_{ij},
\end{aligned}
\end{equation}
where we use the fact that $Z_{\varepsilon}(\vartheta,\cdot)=0$ to obtain
\begin{equation*}
\begin{aligned}
\tilde{F}_{ij}\vartheta^i \vartheta^j=2\varepsilon^{2}(1+t)^{2}g_{ij}\vartheta^i \vartheta^j
+\frac{4\xi}{t}\varepsilon(1+t)g_{ij}\vartheta^i \vartheta^j
+2R_{ikjl}(Z_{\varepsilon})_{kl}\vartheta^i \vartheta^j.
\end{aligned}
\end{equation*}
Since $t_{0}$ is the first time and $Z_{\varepsilon}(0)>0$, we know $Z_{\varepsilon}$ is positive-semidefinite for any $t\leq t_{0}$. Choosing, at the point $x_0$, an orthonormal basis consisting of eigenvectors of $Z_{\varepsilon}$, we then have
\[
(Z_{\varepsilon})_{kl}=
\begin{cases}
  \nonumber \lambda_{k}, & \, \text{if} \,\quad  k=l, \\
  \nonumber 0,           & \, \text{if} \,\quad  k \neq l.
\end{cases}
\]
This shows that $\tilde{F}_{ij}\vartheta^i \vartheta^j \geq 0$, here we have used the non-negativity of the sectional curvature. Details can be found in \cite[p16]{Zhang Qi and Li Xiaolong}.
Combining with $\tilde{F}_{ij}\vartheta^i \vartheta^j \geq 0$ and \eqref{Zij}, we obtain
$$0 \geq \partial_{t}(Z_{\varepsilon}(\vartheta,\vartheta))>0$$ at $(x_{0},t_{0})$,
which is a contradiction. Hence $Z_{\varepsilon}(V,V)>0$ on $\mathrm{\bf{M}} \times (0, T)$ for all $\varepsilon > 0$ and by taking the limit as $\varepsilon \rightarrow 0$, we get $Z(V,V) \geq 0$ on $\mathrm{\bf{M}} \times (0, T)$. This proves the theorem.

\end{proof}


\begin{appendix}
\renewcommand{\thesection}{\Alph{section}}\section{}\label{appendix A}

In this section, we follow the idea of the proof of Lemma 4.1 in \cite{Hamilton 1993} (i.e., Theorem \ref{Theorem B} in this paper) to derive the specific value of the constant $\theta_{1}$ used in Theorem \ref{Thm-reverse Harnack inequality}, which also streamlines the original proof.

\begin{proof}[\bf{Proof of Theorem \ref{Theorem B}}]
By direct computation, we obtain
\begin{equation}\label{evolve of du/u-fixed metric}
(\partial_{t}-\Delta) \left( \frac{|\nabla u|^2}{u} \right)
= - \frac{2}{u} \left| \nabla_i \nabla_j u - \frac{\nabla_i u \nabla_j u}{u} \right|^2
-\frac{2}{u}\mathrm{Ric}(\nabla u, \nabla u).
\end{equation}
According to Cauchy-Schwarz inequality, we deduce
$$(\partial_{t}-\Delta) \left( \frac{|\nabla u|^2}{u} \right)
\leq - \frac{2}{nu} \left| \Delta u - \frac{|\nabla u|^2}{u} \right|^2 + 2 K \frac{|\nabla u|^2}{u}.$$
We denote
$$\varphi=\frac{e^{Kt}-1}{Ke^{Kt}},\quad
h=\varphi \left( \Delta u + \frac{|\nabla u|^2}{u} \right)
- u \left( n + 4 \ln \left( \frac{A}{u} \right) \right).$$
Then using $\partial_{t}\varphi=e^{-Kt}$ and $\partial_{t}(\Delta u)-\Delta(\partial_{t}u)=0$, we conclude
\begin{equation*}
(\partial_{t}-\Delta)h
=e^{-Kt} \left( \Delta u + \frac{|\nabla u|^2}{u} \right)
+ \varphi \left(\partial_{t} - \Delta \right) \left( \frac{|\nabla u|^2}{u} \right)
- 4 \frac{|\nabla u|^2}{u}.
\end{equation*}
Thus, it follows from $\varphi'+K\varphi=1$ and \eqref{evolve of du/u-fixed metric} that
\begin{equation*}
(\partial_{t}-\Delta)h
\leq e^{-Kt} \left( \Delta u - \frac{|\nabla u|^2}{u} \right)
-\frac{2 \varphi}{nu} \left| \Delta u - \frac{|\nabla u|^2}{u} \right|^2
- 2 \frac{|\nabla u|^2}{u}.
\end{equation*}
Now we claim that
$$(\partial_{t}-\Delta)h \leq 0$$
whenever $h \geq 0$ and we see this by examining the following three cases.
\begin{subequations}
\begin{align*}
     &\text{(i) if $\Delta u \leq \frac{|\nabla u|^2}{u}$ we are done, since $\partial_{t}\varphi=e^{-Kt} \geq 0$ .}\\
     &\text{(ii) if $\frac{|\nabla u|^2}{u} \leq \Delta u \leq 3\frac{|\nabla u|^2}{u}$ we are also done, since $\partial_{t}\varphi=e^{-Kt} \leq 1$.}\\
     &\text{(iii) if $3\frac{|\nabla u|^2}{u} \leq \Delta u$ and $h>0$, then}\\
     &\makebox[\linewidth]{$\displaystyle 2\left( \Delta u - \frac{|\nabla u|^2}{u} \right) \geq \Delta u + \frac{|\nabla u|^2}{u}\geq \frac{nu}{\varphi} $} \\
     &\text{and since $\partial_{t}\varphi=e^{-Kt} \leq 1$ we are done completely.}
\end{align*}
\end{subequations}
Note that $h \leq 0$ at $t = 0$, the maximum principle implies $h \leq 0$ for all $t$.
Since $e^{Kt} - 1 \geq Kt$, it follows from the definition of $\varphi$ that
$t \leq e^{Kt}\varphi(t) \leq e^{K}\varphi(t)$ for $0 \leq t \leq 1$.
This indicates that
\begin{equation*}
t\Delta u
\leq t\left(\Delta u + \frac{|\nabla u|^2}{u}\right)
\leq e^{K}\varphi(t)\left(\Delta u + \frac{|\nabla u|^2}{u}\right).
\end{equation*}
Finally, using $h \leq 0$, we have
\begin{equation*}
t\Delta u
\leq e^{K}u\left(n+4\ln\left(\frac{A}{u}\right)\right)
\leq \theta_{1}\left(1+\ln\left(\frac{A}{u}\right)\right)u,
\end{equation*}
where $\theta_{1}=e^{K}\max\{n,4\}$.
\end{proof}

\renewcommand{\thesection}{ \Alph{section}}
\section{}\label{appendix B}
In this section, under the condition that the manifold has nonnegative Ricci curvature, we compute the constant $\theta_{2}$ in Theorem \ref{Thm-reverse Harnack inequality} using the upper and lower bounds of the heat kernel $G(x,t;y,0)$. Since the constant $\theta_{2}$ is determined by \eqref{Zhang-CAG-log(A/u) bound} in the proof of Theorem \ref{Thm-reverse Harnack inequality}, we only need to estimate $\ln\left(\frac{eA}{G(x,t;y,0)}\right)$. The details are as follows.

According to Theorem 13.4 and Theorem 13.8 in \cite{Geometric analysis}, the following upper and lower bounds for $G(x,t;y,0)$ on a manifold with nonnegative Ricci curvature hold
\[
2^{-\frac{n}{2}} e^{-2} (2\pi)^{-\frac{n}{2}} \frac{\alpha_{n-1}}{n} \frac{1}{|B(x,\sqrt{t})|} \exp\left(-\frac{d^2(x,y)}{4t}\right)
\leq G(x,t;y,0) \leq e^2 \frac{1}{|B(x,\sqrt{t})|} \exp\left(-\frac{d^2(x,y)}{12t}\right)
\]
for any $t \in [\frac{T}{2}, T]$, where $T \leq 1$ and $\alpha_{n-1}$ is the area of the unit $(n-1)$ - sphere.
These upper and lower bounds imply that
\[
A = \sup_{\mathrm{\bf{M}} \times [\frac{T}{2},T]} G(x,t;y,0) \leq \frac{e^2}{\inf_{z \in M} |B(z,\sqrt{\frac{T}{2}})|}
\]
and
\[
\frac{1}{G(x,t;y,0)}
\leq 2^{\frac{n}{2}} e^2 (2\pi)^{\frac{n}{2}} \frac{n}{\alpha_{n-1}} |B(x,\sqrt{T})| \exp\left(\frac{\text{diam}^2}{2T}\right),
\]
which yields that
\begin{equation}\label{upper bound of A/G}
\frac{A}{G(x,t;y,0)}
\leq 2^{\frac{n}{2}} e^4 (2\pi)^{\frac{n}{2}} \frac{n}{\alpha_{n-1}} \exp\left(\frac{\text{diam}^2}{2T}\right)
\frac{|B(x,\sqrt{T})|}{\inf_{z \in M} |B(z,\sqrt{\frac{T}{2}})|}.
\end{equation}
Assuming $\inf_{z \in M} |B(z,\sqrt{\frac{T}{2}})|=|B(z_{0},\sqrt{\frac{T}{2}})|$, we obtain the following estimates via the volume comparison theorem.

(a) If $\sqrt{T} > 2d(x,z_0)$, then
\[
|B(x,\sqrt{T})| \leq 2^n |B(z_0,\sqrt{T})|.
 \]

(b) If $\sqrt{T} \leq 2d(x,z_0)$, then
\[
|B(x,\sqrt{T})|
\leq 4^n |B(z_0,\sqrt{T})| \left(\frac{4d(x,z_0)+\sqrt{T}}{\sqrt{T}}\right)^n.
 \]
(The above estimates can be found in \cite[p177]{Peter Li and Yau}.)
Therefore, in any case we have
\[ |B(x,\sqrt{T})| \leq 4^n \left(1+\frac{4 \text{diam}}{\sqrt{T}}\right)^n |B(z_0,\sqrt{T})|. \]
Applying the volume comparison theorem once more together with the above estimate, we obtain
\[
\frac{|B(x,\sqrt{T})|}{\inf_{z \in M} |B(z,\sqrt{\frac{T}{2}})|}
= \frac{|B(x,\sqrt{T})|}{|B(z_0,\sqrt{\frac{T}{2}})|}
\leq 4^n \left(1+\frac{4 \text{diam}}{\sqrt{T}}\right)^n (\sqrt{2})^n.
 \]
Combining with \eqref{upper bound of A/G}, we have
\[
\frac{A}{G(x,t;y,0)}
\leq 2^{3n} e^4 (2\pi)^{\frac{n}{2}} \frac{n}{\alpha_{n-1}} \cdot
\left(1+\frac{4 \text{diam}}{\sqrt{T}}\right)^n \exp\left(\frac{\text{diam}^2}{2T}\right).
 \]
Taking the logarithm on both sides of the above inequality, we obtain
\[
\ln\left(\frac{A}{G(x,t;y,0)}\right)
\leq \ln\left(2^{3n} e^4 (2\pi)^{\frac{n}{2}} \frac{n}{\alpha_{n-1}}\right) + n + 8n^2 + \frac{\text{diam}^2}{T},
\]
where we used $\ln x \leq x$. Hence
\[
\ln\left(\frac{eA}{G(x,t;y,0)}\right)
\leq \left[1 + n+ 18n^2 + \ln\left(2^{3n} e^4 (2\pi)^{\frac{n}{2}} \frac{n}{\alpha_{n-1}}\right)\right] \left(1 + \frac{\text{diam}^2}{t}\right)
\]
since $t \leq T$.
\hfill\qedsymbol

This concludes the estimate of constant $\theta_{2}$ in Theorem \ref{Thm-reverse Harnack inequality} when $\mathrm{Ric} \geq 0$.

\end{appendix}

\section*{Acknowledgement}
The authors would like to thank Professor Yu Zheng and Ms. Xuenan Fu for their helpful comments.
The first named author gratefully acknowledges the support from the National Natural Science Foundation of China (No.~12271163), Key Laboratory of MEA (Ministry of Education), the Science and Technology Commission of Shanghai Municipality (No.~22DZ2229014), and Shanghai Key Laboratory of PMMP (No.~18DZ2271000).
The support of Simons Foundation grant 710364 is gratefully acknowledged by the third named author.



\begin{thebibliography}{99}

\setlength{\labelwidth}{4em}

\bibitem[ATW]{ATW} M. Arnaudonb, A. Thalmaierc, F. Y. Wang, {\it Gradient estimates and Harnack inequalities on
non-compact Riemannian manifolds}, Stochastic Processes and their Applications 119 (2009) 3653-3670.

\bibitem[BQ]{Bakry and Qian Zhongmin}
D. Bakry and Z. M. Qian,
{\it Harnack inequalities on a manifold with positive or negative Ricci curvature},
Rev. Mat. Iberoamericana
\textbf{15} (1999), no. 1, 143-179.

\bibitem[BZ]{Zhang and Bamler}
R. H. Bamler and Q. S. Zhang,
{\it Heat kernel and curvature bounds in Ricci flows with bounded scalar curvature},
Adv. Math.
\textbf{319} (2017), 396-450.

\bibitem[CN]{Cao Huaidong and Ni Lei-Kahler}
H.-D. Cao and L. Ni,
{\it Matrix Li-Yau-Hamilton estimates for the heat equation on K\"{a}hler manifolds},
Math. Ann.
\textbf{331} (2005), no. 4, 795-807.



\bibitem[CH]{Cao Xiaodong and Hamilton}
X. D. Cao and R. S. Hamilton,
{\it Differential Harnack estimates for time-dependent heat equations with potentials},
Geom. Funct. Anal.
\textbf{19} (2009), no. 4, 989-1000.

\bibitem[CZ]{CZ} Cao, Xiaodong; Zhang, Qi S., {\it The conjugate heat equation and ancient solutions of the Ricci flow},
Adv. Math. 228 (2011), no. 5, 2891-2919.

\bibitem[CY]{Cheng Lijuan}
L.-J. Cheng and R.-Y. Yang,
{\it Hessian matrix estimates of heat-type equations via Bismut-Stroock Hessian formula},
arXiv: 2506.11805.

\bibitem[C++]{Bennett Chow 2007}
B. Chow, S.-C. Chu, D. Glickenstein, C. Guenther, J. Isenberg, T. Ivey, D. Knopf, P. Lu, F. Luo, and L. Ni,
The Ricci flow: Techniques and Applications,
American Mathematical Society, Providence, RI, 2007.

\bibitem[ChH]{Bennett and Hamilton-Harnack}
B. Chow and R. S. Hamilton,
{\it Constrained and linear Harnack inequalities for parabolic equations},
Invent. Math.
\textbf{129} (1997), no. 2, 213-238.

\bibitem[CK]{Bennett Chow 2004}
B. Chow and D. Knopf,
The Ricci flow: an introduction,
American Mathematical Society, Providence, RI, 2004.

\bibitem[CLN]{Hamilton Ricci flow}
B. Chow, P. Lu, and L. Ni,
Hamilton's Ricci flow,
American Mathematical Society, Providence, RI; Science Press Beijing, New York, 2006.

\bibitem[FGS]{Garofalo}
E. B. Fabes, N. Garofalo, and S. Salsa,
{\it A backward Harnack inequality and Fatou theorem for nonnegative solutions of parabolic equations},
Illinois J. Math.
\textbf{30} (1986), no. 4, 536-565.

\bibitem[Ha]{Hamilton 1982}
R. S. Hamilton,
{\it Three-manifolds with positive Ricci curvature},
J. Differential Geometry
\textbf{17} (1982), no. 2, 255-306.

\bibitem[Ha2]{Hamilton 1993}
R. S. Hamilton,
{\it A matrix Harnack estimate for the heat equation},
Comm. Anal. Geom.
\textbf{1} (1993), no. 1, 113-126.

\bibitem[HS]{Hamilton and Sesum}
R. S. Hamilton and N. Sesum,
{\it Properties of the solutions of the conjugate heat equations},
Amer. J. Math.
\textbf{131} (2009), no. 1, 153-169.

\bibitem[Hsu]{Hsu}
E. P. Hsu,
{\it Estimates of derivatives of the heat kernel on a compact Riemannian manifold},
Proc. Amer. Math. Soc.
\textbf{127} (1999), no. 12, 3739-3744.

\bibitem[HZ]{Zhang and Han Qing}
Q. Han and Q. S. Zhang,
{\it An upper bound for Hessian matrices of positive solutions of heat equations},
J. Geom. Anal.
\textbf{26} (2016), no. 2, 715-749.

\bibitem[Ko]{Kots}
B. L. Kotschwar,
{\it Hamilton's gradient estimate for the heat kernel on complete manifolds},
Proc. Amer. Math. Soc.,
\textbf{135}(9), 3013-3019 (2007).

\bibitem[KZ]{Kuang and Zhang}
S. L. Kuang and Q. S. Zhang,
{\it  A gradient estimate for all positive solutions of the conjugate heat equation under Ricci flow},
J. Funct. Anal.
\textbf{255} (2008), no. 4, 1008-1023.

\bibitem[Li]{Geometric analysis}
P. Li,
Geometric analysis,
Cambridge University Press, Cambridge, 2012.

\bibitem[LY]{Peter Li and Yau}
P. Li and S.-T. Yau,
{\it On the parabolic kernel of the Schr\"{o}dinger operator},
Acta Math.
\textbf{156} (1986), no. 3-4, 153-201.

\bibitem[LLR]{Li Xiaolong and Ren Xin-An 2025}
X. L. Li, H.-Y. Liu, and X.-A. Ren,
{\it Matrix Li-Yau-Hamilton estimates under K\"{a}hler-Ricci flow},
J. Geom. Anal.
\textbf{35} (2025), no. 4, Paper No. 113, 19 pp.

\bibitem[LZ]{Zhang Qi and Li Xiaolong}
X. L. Li and Q. S. Zhang,
{\it Matrix Li-Yau-Hamilton estimates under Ricci flow and parabolic frequency},
Calc. Var. Partial Differential Equations
\textbf{63} (2024), no. 3, Paper No. 63, 38 pp.

\bibitem[LiZz]{LiZz}
Lu Li and Zhen Lei Zhang,
{\it On Li-Yau heat kernel estimate},
Acta Math. Sin. (Engl. Ser.) 37 (2021), no. 8, 1205-1218.

\bibitem[Ni]{Ni Lei-kahler}
L. Ni,
{\it A matrix Li-Yau-Hamilton estimate for K\"{a}hler-Ricci flow},
J. Differential Geom.
\textbf{75} (2007), no. 2, 303-358.

\bibitem[Pe]{Perelman}
G. Perelman,
{\it The entropy formula for the Ricci flow and its geometric applications},
http://arXiv.org/math.DG/0211159v1, 11 November, 2002.

\bibitem[RYSZ]{Ren Xin-an 2015}
X.-A. Ren, S. Yao, L.-J. Shen, and G.-Y. Zhang,
{\it Constrained matrix Li-Yau-Hamilton estimates on K\"{a}hler manifolds},
Math. Ann.
\textbf{361} (2015), no. 3-4, 927-941.

\bibitem[Sheu]{Sheu}
S.-J. Sheu, {\it Some estimates of the transition density function of a nondegenerate diffusion
Markov process}, Annals of Probability, 19, No. 2 (1991), 538-561.

\bibitem[ST]{topping}
M. Simon and P. M. Topping,
{\it Local control on the geometry in 3D Ricci flow},
J. Differential Geom.
\textbf{122} (2022), no. 3, 467-518.

\bibitem[StTu]{Stroock} D. W. Stroock and J. Turetsky,  {\it Upper bounds on derivatives of the logarithm of the heat kernel},
Comm. Anal. Geom. 6 (1998), no. 4, 669-685.

\bibitem[SWZ]{Song-Wu-Zhu}
X. Y. Song, L. Wu and M. Zhu,
{\it  A direct approach to sharp Li-Yau estimates on closed manifolds with negative Ricci lower bound},
Proc. Amer. Math. Soc.
\textbf{153} (2025), no. 1, 291-305.

\bibitem[W]{W} Feng-Yu Wang, {\it
Logarithmic Sobolev inequalities on noncompact Riemannian manifolds.},
Probab. Theory Related Fields 109 (1997), no. 3, 417-424.

\bibitem[YLR]{Yao and Liu and Ren}
S. Yao, H.-Y. Liu, and X.-A. Ren,
{\it Matrix Li-Yau-Hamilton estimates for nonlinear heat equations under geometric flows},
Pacific J. Math.
\textbf{338} (2025), no. 1, 19-34.

\bibitem[Zh]{Zhang 2006}
Q. S. Zhang,
{\it Some gradient estimates for the heat equation on domains and for an equation by Perelman},
Int. Math. Res. Not.
(2006), Art. ID 92314, 39 pp.

\bibitem[Zh2]{Zhang-CAG}
------,
{\it A sharp Li-Yau gradient bound on compact manifolds},
Comm. Anal. Geom.
\textbf{33} (2025), no. 1, 261-274.

\bibitem[ZZ]{Zhang and Zhu Meng}
Q. S. Zhang and M. Zhu,
{\it Li-Yau gradient bounds on compact manifolds under nearly optimal curvature conditions},
J. Funct. Anal.
\textbf{275} (2018), no. 2, 478-515.




\end{thebibliography}
\end{document}